\DeclareMathOperator{\diag}{diag}
\DeclarePairedDelimiter{\ceil}{\lceil}{\rceil}
\newenvironment{butchertableau}[2][1.35]
{\def\arraystretch{#1}\array{#2}}
{\endarray}
\NewDocumentCommand{\zeros}{m g}{\mathbf{0}_{{#1} \times \IfNoValueTF{#2}{#1}{#2}}}
\NewDocumentCommand{\eye}{m g}{\mathbf{I}_{{#1} \times \IfNoValueT{#2}{#1}}}
\newcommand{\one}{\mathbbm{1}}
\newcommand{\R}{\mathbbm{R}}
\newcommand{\Cplx}{\mathbbm{C}}
\newcommand{\instage}{Y}
\newcommand{\exstage}[1][n-1]{y^{[#1]}}
\newcommand{\A}{\mathbf{A}}
\newcommand{\B}{\mathbf{B}}
\newcommand{\U}{\mathbf{U}}
\newcommand{\V}{\mathbf{V}}
\renewcommand{\c}{\mathbf{c}}
\newcommand{\C}[1]{\mathbf{C}_{#1}}
\newcommand{\M}{\mathbf{M}}
\newcommand{\shiftmat}[1]{\mathbf{K}_{#1}}
\newcommand{\expshiftmat}[1]{\mathbf{E}_{#1}}
\newcommand{\hessmat}[1]{\widehat{\mathbf{E}}_{#1}}
\newcommand{\phishiftmat}[1]{\mathbf{F}_{#1}}
\renewcommand{\AE}{\mathbf{A}}
\newcommand{\BE}{\mathbf{B}}
\newcommand{\QE}{\mathbf{W}}
\newcommand{\qE}{w}
\newcommand{\ME}{\mathbf{M}}
\newcommand{\AI}{\widehat{\mathbf{A}}}
\newcommand{\BI}{\widehat{\mathbf{B}}}
\newcommand{\QI}{\widehat{\mathbf{W}}}
\newcommand{\qI}{\widehat{w}}
\newcommand{\MI}{\widehat{\mathbf{M}}}
\newcommand{\sRegionE}[1][\alpha]{\ensuremath{\mathcal{S}}}
\newcommand{\sRegionI}[1][\alpha]{\ensuremath{\widehat{\mathcal{S}}}}
\newcommand{\sRegionStiff}[1][\alpha]{\ensuremath{\widehat{\mathcal{S}}_{#1}}}
\newcommand{\sRegionNonstiff}[1][\alpha]{\ensuremath{\mathcal{S}_{#1}}}
\pgfplotsset{compat=1.14}
\pgfplotsset{every axis/.append style={
		width=\linewidth,
		x label style={font=\small},
		y label style={font=\small},
		tick label style={font=\small},
		legend cell align={left},
		legend columns=1,
		legend style={at={(0.5,1.05)}, anchor=south, font=\tiny},
		cycle list name=mycolor,
		mark size=1.4pt,
        max space between ticks=20pt,
         try min ticks=3,
}}
\newif\ifreport
\begin{document}

\ifreport
	\csltitle{Parallel implicit-explicit general linear methods}
	\cslauthor{Steven Roberts, Arash Sarshar, and Adrian Sandu}
	\cslemail{steven94@vt.edu, sarshar@vt.edu, sandu@cs.vt.edu}
	\cslreportnumber{12}
	\cslyear{19}
	\csltitlepage
\fi

\title{Parallel implicit-explicit general linear methods\thanks{This work was funded by awards NSF CCF–1613905, NSF ACI–1709727, AFOSR DDDAS FA9550-17-1-0015, and by the Computational Science Laboratory at Virginia Tech.}
}


\author{Steven Roberts \and Arash Sarshar \and Adrian Sandu}


\newcommand{\instituteinfo}{\csl{}, \cslinstitution{}, \csladdress{}}
\institute{S. Roberts (Corresponding author) \at
		\instituteinfo \\
		\email{steven94@vt.edu}
	\and
		A. Sarshar \at
		\instituteinfo \\
		\email{sarshar@vt.edu}
	\and
		A. Sandu \at
		\instituteinfo \\
		\email{sandu@cs.vt.edu}
}

\date{Received: date / Accepted: date}

\maketitle

\begin{abstract}
	High-order discretizations of partial differential equations (PDEs) necessitate high-order time integration schemes capable of handling both stiff and nonstiff operators in an efficient manner.  Implicit-explicit (IMEX) integration based on general linear methods (GLMs) offers an attractive solution due to their high stage and method order, as well as excellent stability properties.  The IMEX characteristic allows stiff terms to be treated implicitly and nonstiff terms to be efficiently integrated explicitly. This work develops two systematic approaches for the development of IMEX GLMs of arbitrary order with stages that can be solved in parallel.  The first approach is based on diagonally implicit multistage integration methods (DIMSIMs) of types 3 and 4.  The second is a parallel generalization of IMEX Euler and has the interesting feature that the linear stability is independent of the order of accuracy.  Numerical experiments confirm the theoretical rates of convergence and reveal that the new schemes are more efficient than serial IMEX GLMs and IMEX Runge--Kutta methods.
	
	\keywords{Parallel \and Time integration \and IMEX methods \and General linear methods}
	\subclass{65L05 \and 65L20 \and 65L80}
\end{abstract}

\section{Introduction}

In this work, we consider the autonomous, additively partitioned system of ordinary differential equations (ODEs)
\begin{equation} \label{eq:ode}
	y' = f(y) + g(y), \quad y(t_0) = y_0, \quad t_0 \le t \le t_{F},
\end{equation}
where $f$ is nonstiff, $g$ is stiff, and $y \in \R^d$.  Such systems frequently arise from applying the methods of lines to semidiscretize a partial differential equation (PDE).  For example, processes such as diffusion, advection, and reaction all have different stiffnesses, CFL conditions, and optimal integration schemes.  Implicit-explicit (IMEX) methods offer a specialized approach for solving \cref{eq:ode} by treating $f$ with an inexpensive explicit method and limiting the application of an implicit method, which is generally more expensive, to $g$.

The IMEX strategy has a relatively long history in the context of Runge--Kutta methods \cite{ascher1997implicit,Kennedy2003,pareschi2005implicit,boscarino2009class,Sandu_2010_extrapolatedIMEX} and linear multistep methods \cite{ascher1995implicit,frank1997stability,hundsdorfer2007imex}. Zhang, Zharovski, and Sandu proposed IMEX schemes based on two-step Runge--Kutta (TSRK) and General Linear Methods (GLM) \cite{Sandu_2014_IMEX-GLM,Sandu_2015_IMEX-TSRK,Sandu_2012_ICCS-IMEX} with further developments  reported in \cite{Sandu_2014_IMEX-RK,Sandu_2014_IMEX_GLM_Extrap,Sandu_2015_Stable_IMEX-GLM,Sandu_2016_highOrderIMEX-GLM,JACKIEWICZ2017,bras2017accurate,BRAS2018207,izzo2019transformed}. Similarly, Peer methods, a subclass of GLMs, have been utilized for IMEX integration in the literature such as \cite{lang2017extrapolation,schneider2018extrapolation,soleimani2018superconvergent,ditkowski2019imex}.

High-order IMEX GLMs do not face the stability barriers that constrain multistep counterparts and have much simpler order conditions than IMEX Runge--Kutta methods.  Moreover, they can attain high stage order making them resilient to the order reduction phenomena seen in very stiff problems and PDEs with time-dependent boundary conditions.

A major challenge when deriving high-order IMEX GLMs is ensuring the stability region is large enough to be competitive with IMEX Runge--Kutta schemes.  One can directly optimize for the area of the stability region under the constraints of the order conditions, but this is quite challenging as the objective and constraint functions are highly nonlinear and expensive to evaluate.  In addition, this optimization is not scalable, with sixth order appearing to be the highest order achieved with this strategy \cite{JACKIEWICZ2017}.

Parallelism for IMEX schemes is scarcely explored \cite{Connors2011,ditkowski2019imex}, but it is well-studied for traditional, unpartitioned GLMs \cite{butcher1993general,butcher1995parallel,butcher1997order,jackiewicz2009general}.  One step of a GLM is:
\begin{alignat*}{2}
	\instage_i &= h \sum_{j=1}^s a_{i,j} \left( f(\instage_j) + g(\instage_j) \right) + \sum_{j=1}^r u_{i,j} \, \exstage_j, & \qquad i &= 1, \dots, s, \\
	\exstage[n]_i &= h \sum_{j=1}^s b_{i,j} \left( f(\instage_j) + g(\instage_j) \right) + \sum_{j=1}^r v_{i,j} \, \exstage_j, & \qquad i &= 1, \dots, r.
\end{alignat*}
Methods are frequently categorized into one of four types to characterize suitability for stiff problems and parallelism \cite{butcher1993diagonally}.  Type 1 and 2 are serial and have the structure
\begin{equation} \label{eq:a_coeff}
	\A = \begin{bmatrix}
		\lambda \\
		a_{2,1} & \lambda \\
		\vdots & \vdots & \ddots \\
		a_{s,1} & a_{s,2} & \cdots & \lambda
	\end{bmatrix}.
\end{equation}
When $\lambda = 0$, the method is of type 1, and of type 2 for $\lambda > 0$.  Of interest to this paper are methods of type 3 and 4, which have $A = \lambda \, \eye{s}$ so that all internal stages are independent and can be computed in parallel.  Type 3 methods are explicit with $\lambda = 0$, while type 4 methods are implicit with $\lambda > 0$.

This work extends traditional, parallel GLMs into the IMEX setting and proposes two systematic approaches for designing stable methods of arbitrary order.  The first uses the popular DIMSIM framework for the base methods.  In particular, we use a family of type 4 methods proposed by Butcher \cite{butcher1997order} for the implicit base and show an explicit counterpart is uniquely determined.  This eliminates the need to perform a sophisticated optimization procedure to determine coefficients.  The second approach can be interpreted as a generalization of the simplest IMEX scheme: IMEX Euler.  It starts with an ensemble of states each approximating the ODE solution at different points in time.  In parallel, they are propagated one timestep forward using IMEX Euler, which is only first order accurate.  A new, highly-accurate ensemble of states is computed by taking linear combinations of the IMEX Euler solutions.  This scheme, which we call \textit{parallel ensemble IMEX Euler}, can be described in the framework of IMEX GLMs. Notably, it maintains the exact same stability region and roughly the same runtime in a parallel setting as IMEX Euler while achieving arbitrarily high orders of consistency.  Again, coefficients are determined uniquely, and we show they are very simple to compute using basic matrix operations.

To assess the quality of the two new families of parallel IMEX GLMs, we apply them to a PDE with time-dependent forcing and boundary conditions, as well as to a singularly perturbed PDE.  Convergence is verified as high as eighth order for these challenging problems which can cause order reduction for methods of low stage order.  For the performances tests, the parallel methods were run on several nodes in a cluster using MPI and compared to existing, high-quality, serial IMEX Runge--Kutta and IMEX GLMs run on a single node.  The best parallel methods could reach a desired solution accuracy approximately two to four times faster.

The structure of this paper is as follows.  \Cref{sec:background} reviews the formulation, order conditions, and stability analysis of IMEX GLMs.  This is then specialized in \cref{sec:parallel_imex_glm} for parallel IMEX GLMs.  \Cref{sec:parallel_imex_dimsim,sec:parallel_ensemble} present and analyze two new families of parallel IMEX GLMs.  The convergence and performance of these new schemes is compared to other IMEX GLMs and IMEX Runge--Kutta methods in \cref{sec:experiments}.  We summarize our findings and provide final remarks in \cref{sec:conclusion}.
\section{Background on IMEX GLMs}
\label{sec:background}
An IMEX GLM \cite{Sandu_2014_IMEX-GLM} computes $s$ internal and $r$ external stages using timestep $h$ according to:
\begin{subequations} \label{eq:imex_glm}
	\begin{alignat}{2}
		\instage_i &= h \sum_{j=1}^{i-1} a_{i,j} \, f(Y_j) + h \sum_{j=1}^{i} \widehat{a}_{i,j} \, g(Y_j)  + \sum_{j = 1}^r u_{i,j} \, \exstage_j, & \qquad i &= 1, \dots, s, \\
		\exstage[n]_i &= h \sum_{j=1}^s \left( b_{i,j} \, f(Y_j) + \widehat{b}_{i,j} \, g(Y_j) \right) + \sum_{j = 1}^r v_{i,j} \, \exstage_j, & \qquad i &= 1, \dots, r.
	\end{alignat}
\end{subequations}
Using the matrix notation for the coefficients
\begin{equation*}
\AE:= (a_{i,j}), \quad \BE:=(b_{i,j}), \quad \U:=(u_{i,j}), \quad	\AI:= (\hat{a}_{i,j}), \quad \BI:=(\hat{b}_{i,j}) \quad \V:=(v_{i,j}),
\end{equation*}
the IMEX GLM can be represented in the Butcher tableau
\begin{align}
	\begin{butchertableau}{c|c|c|c}
		\c & \AE & \AI & \U \\ \hline
		& \BE & \BI & \V
	\end{butchertableau}.
\end{align}

Assuming the incoming external stages to a step satisfy
\begin{equation} \label{eq:incoming_ex_stages}
	\begin{split}
		\exstage_i &= \qE_{i,0} \, y(t_{n-1}) +  \sum_{k=1}^{p} \qE_{i,k} \, h^k \, {\dv[k-1]{f(y(t))}{t}}(t_{n-1}) \\
		& \quad + \sum_{k=1}^{p} \qI_{i,k} \, h^k \, {\dv[k-1]{g(y(t))}{t}}(t_{n-1}) + \order{h^{p+1}}, \qquad i = 1, \dots, r,
	\end{split}
\end{equation}
an IMEX GLM is said to have \textit{stage order} $q$ if 
\begin{equation*}
	\instage_i  = y(t_{n-1} + c_i \, h) + \order{h^{q+1}}, \qquad i = 1, \dots, s,
\end{equation*} 
and \textit{order} $p$ if 
\begin{align*}
	 \exstage[n]_i &= \qE_{i,0} \, y(t_{n}) +  \sum_{k=1}^{p} \qE_{i,k} \, h^k \, {\dv[k-1]{f(y(t))}{t}}(t_n) \\
	 & \quad + \sum_{k=1}^{p} \qI_{i,k} \, h^k \, {\dv[k-1]{g(y(t))}{t}}(t_n) + \order{h^{p+1}}, \qquad i = 1, \dots, r.
\end{align*}
The Taylor series weights for the external stages are also described in the matrix form
\begin{equation}
	\QE = (\qE_{i,j}) \in \R^{r \times (p+1)}, \qquad \QI = (\qI_{i,j}) \in \R^{r \times (p+1)},
\end{equation}
with $w_{i,0} = \widehat{w}_{i,0}$ for $i = 1, \dots, r$.

The order conditions for IMEX GLMs are discussed in detail in \cite{Sandu_2014_IMEX-GLM}.  Notably, a preconsistent IMEX GLM has order $p$ and stage order $q \in \{p, p-1\}$ if and only if the base methods have order $p$ and stage order $q \in \{p, p-1\}$.  Here, we present the order conditions in a compact matrix form.  First, we define the Toeplitz matrices
\begin{equation*}
	\shiftmat{n} = \begin{bmatrix}
		0 & 1 \\
		& 0 & 1 \\
		& & \ddots & \ddots \\
		& & & 0 & 1 \\
		& & & & 0
	\end{bmatrix} \in \R^{n \times n},
	\qquad
	\expshiftmat{n} = \exp(\shiftmat{n}) = \begin{bmatrix}
		1 & 1 & \frac{1}{2} & \cdots & \frac{1}{(n-1)!} \\
		& 1 & 1 & \cdots & \frac{1}{(n-2)!} \\
		& & \ddots & \ddots & \vdots \\
		& & & 1 & 1 \\
		& & & & 1
	\end{bmatrix} \in \R^{n \times n},
\end{equation*}
and scaled Vandermonde matrix
\begin{equation}
	\C{n} = \begin{bmatrix}
		\one_s & \c & \frac{\c^2}{2} & \cdots & \frac{\c^{n-1}}{(n-1)!}
	\end{bmatrix} \in \R^{s \times n}.
\end{equation}
Powers of a vector are understood to be component-wise, and $\one_s$ represents the vector of ones of dimension $s$.

\begin{theorem}[Compact IMEX GLM order conditions \cite{Sandu_2014_IMEX-GLM}] \label{thm:imex_glm_oc}
	Assume $\exstage$ satisfies \cref{eq:incoming_ex_stages}.  The IMEX GLM \cref{eq:imex_glm} has order $p$ and stage order $q \in \{p, p-1\}$  if and only if
	\begin{subequations} \label{eq:imex_glm_oc}
		\begin{align}
			\label{eq:imex_glm_oc:int_e}
			\C{q+1} - \AE \, \C{q+1} \, \shiftmat{q+1} - \U \, \QE_{:,0:q} = \zeros{s}{(q+1)}, \\
			\label{eq:imex_glm_oc:int_i}
			\C{q+1} - \AI \, \C{q+1} \, \shiftmat{q+1} - \U \, \QI_{:,0:q} = \zeros{s}{(q+1)}, \\
			\label{eq:imex_glm_oc:ext_e}
			\QE \, \expshiftmat{p+1} - \BE \, \C{p+1} \, \shiftmat{p+1} - \V \, \QE = \zeros{r}{(p+1)}, \\
			\label{eq:imex_glm_oc:ext_i}
			\QI \, \expshiftmat{p+1} - \BI \, \C{p+1} \, \shiftmat{p+1} - \V \, \QI = \zeros{r}{(p+1)},
		\end{align}
		where $\QE_{:,0:q}$ is the first $q+1$ columns of $\QE$, and $\QI_{:,0:q}$ is defined analogously.
	\end{subequations}
\end{theorem}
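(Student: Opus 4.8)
The plan is to expand every stage in a Taylor series about $t_{n-1}$, keeping the contributions of the $f$-derivatives and $g$-derivatives separate, and then to match coefficients. The three structural matrices encode exactly the operations needed. The scaled Vandermonde $\C{q+1}$ turns a vector of scaled derivatives at $t_{n-1}$ into the Taylor polynomials of the stage values $y(t_{n-1}+c_i\,h)$; right-multiplication by $\shiftmat{q+1}$ shifts an expansion up by one order, which models the factor $h$ together with the passage from $y^{(k)}$ to $y^{(k-1)}$ that occurs when a stage value is replaced by a stage derivative $f(\instage_j)$ or $g(\instage_j)$; and $\expshiftmat{p+1}=\exp(\shiftmat{p+1})$ advances an expansion from $t_{n-1}$ to $t_n=t_{n-1}+h$, as follows from Taylor's theorem applied to each $\dv[k-1]{f(y(t))}{t}$ and $\dv[k-1]{g(y(t))}{t}$.

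First I would treat the internal stages. Substituting the stage-order ansatz $\instage_j = y(t_{n-1}+c_j\,h)+\order{h^{q+1}}$ gives $f(\instage_j)=f(y(t_{n-1}+c_j\,h))+\order{h^{q+1}}$ and likewise for $g$, so that every term in \cref{eq:imex_glm} becomes a linear combination of the scaled $f$- and $g$-derivatives at $t_{n-1}$, together with $y(t_{n-1})$. Using \cref{eq:incoming_ex_stages} for the external inputs and writing $y(t_{n-1}+c_i\,h)=\sum_{k\ge0}\frac{(c_i\,h)^k}{k!}\,y^{(k)}(t_{n-1})$ with $y^{(k)}=\dv[k-1]{f(y(t))}{t}+\dv[k-1]{g(y(t))}{t}$, I would collect the $\order{h^{q+1}}$-truncated expansions on each side. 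Because the identity must hold for arbitrary $f$ and $g$, the elementary $f$- and $g$-differentials are linearly independent and may be matched separately: matching the $f$-part yields \cref{eq:imex_glm_oc:int_e} and matching the $g$-part yields \cref{eq:imex_glm_oc:int_i}. This separation is precisely what turns the order requirement into an equivalence. The shared constant term $y(t_{n-1})$ is handled consistently by the first columns of $\QE$ and $\QI$, which coincide because $w_{i,0}=\widehat{w}_{i,0}$.

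Next I would repeat the argument for the external stages. Here the target expansion is centered at $t_n$, so its representation in terms of derivatives at $t_{n-1}$ acquires the extra factor $\expshiftmat{p+1}$, producing the terms $\QE\,\expshiftmat{p+1}$ and $\QI\,\expshiftmat{p+1}$. Substituting the internal-stage approximations into the quadrature sums $h\sum_j b_{i,j}\,f(\instage_j)$ and $h\sum_j \widehat{b}_{i,j}\,g(\instage_j)$, invoking \cref{eq:incoming_ex_stages} once more, and matching $f$- and $g$-differentials separately gives \cref{eq:imex_glm_oc:ext_e,eq:imex_glm_oc:ext_i}. A key point to verify at this stage is that the hypothesis $q\in\{p,p-1\}$ is exactly what makes the external conditions consistent: the stage error $\instage_j-y(t_{n-1}+c_j\,h)=\order{h^{q+1}}$ enters the external relations only through the factor $h$, contributing $\order{h^{q+2}}$, which is negligible at order $p$ precisely when $q+2\ge p+1$, i.e.\ $q\ge p-1$.

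The main obstacle is the separation step: justifying rigorously that agreement of the combined Taylor expansions forces term-by-term matching of the $f$- and $g$-contributions, in both directions of the equivalence. This rests on the linear independence of elementary differentials for generic right-hand sides, a standard B-series fact that I would invoke (following the order-condition analysis of \cite{Sandu_2014_IMEX-GLM}) rather than reprove, together with careful bookkeeping of the Taylor remainders to confirm that no lower-order term is inadvertently dropped. Once the separation is granted, each of the four identities follows by reading off the coefficient of the appropriate scaled derivative, so the remaining work is the routine, if index-heavy, verification that the collected coefficients assemble into the stated matrix products.
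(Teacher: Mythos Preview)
The paper does not actually prove this theorem; it is stated as a background result with a citation to \cite{Sandu_2014_IMEX-GLM} and is followed only by a remark about preconsistency. Your proposal is the standard Taylor-expansion/coefficient-matching argument used to derive such compact order conditions, and the roles you assign to $\C{q+1}$, $\shiftmat{q+1}$, and $\expshiftmat{p+1}$ are correct, as is your explanation of why the hypothesis $q\ge p-1$ is needed to keep the propagated stage error at $\order{h^{p+1}}$. There is nothing to compare against in this paper, but your outline matches the approach in the cited source and would yield a valid proof.
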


\begin{remark}
	The first column in each of the matrix conditions in \cref{eq:imex_glm_oc} corresponds to a preconsistency condition.  
\end{remark}

\subsection{Linear stability of IMEX GLMs}

The standard test problem used to analyze the linear stability of an IMEX method is the partitioned problem
\begin{equation} \label{eq:linear_ode}
y' = \xi \, y + \widehat{\xi} \, y,
\end{equation}
where $\xi \, y$ is considered nonstiff and $\widehat{\xi} \, y$ is considered stiff.  Applying the IMEX GLM \cref{eq:imex_glm} to \cref{eq:linear_ode} yields the stability matrix
\begin{equation*} \label{eq:imex_glm_stability}
	\begin{split}
		\exstage[n] &= \M(w,\widehat{w}) \, \exstage[n-1], \\
		\M(w,\widehat{w}) &= \V + \left( w \, \BE + \widehat{w} \, \BI \right) \left(\eye{s} - w \, \AE - \widehat{w} \, \AI \right)^{-1} \U,
	\end{split}
\end{equation*}
where $w=h \, \xi$ and $\widehat{w}=h \, \widehat{\xi}$.  The set of $(w, \widehat{w}) \in \Cplx \times \Cplx$ for which $\M(w,\widehat{w})$ is power bounded, and thus the IMEX GLM is stable, is a four dimensional region that can be difficult to analyze and visualize.  Following \cite{Sandu_2014_IMEX-GLM}, we also consider the simpler stability regions
\begin{subequations} \label{eq:imex_glm_stability_regions}
	\begin{align}
		\label{eq:imex_glm_stability_regions:stiff}
		\sRegionStiff &= \left\{ \widehat{w} \in \sRegionI \, : \, \abs{\Im(\widehat{w})} < \tan(\alpha) \, \abs{\Re(\widehat{w})} \right\}, \\
		\label{eq:imex_glm_stability_regions:nonstiff}
		\sRegionNonstiff &= \left\{ w \in \sRegionE \, : \, \M(w,\widehat{w}) \text{ power bounded } \forall \widehat{w} \in \sRegionStiff \right\},
	\end{align}
\end{subequations}
where \sRegionE{} and \sRegionI{} are the stability regions of the explicit and implicit base methods, respectively.  \Cref{eq:imex_glm_stability_regions:stiff} is referred to as the \textit{desired stiff stability region} and \cref{eq:imex_glm_stability_regions:nonstiff} as the \textit{constrained nonstiff stability region}.

\section{Parallel IMEX GLMs}
\label{sec:parallel_imex_glm}

An IMEX GLM formed by pairing a type 3 GLM with a type 4 GLM has stages of the form
\begin{subequations} \label{eq:parallel_imex_glm}
	\begin{alignat}{2}
		\instage_i &= h \, \lambda \, g(Y_i) + \sum_{j = 1}^r u_{i,j} \, \exstage_j, & \qquad i &= 1, \dots, s, \\
		\exstage[n]_i &= h \sum_{j=1}^s \left( b_{i,j} \, f(Y_j) + \widehat{b}_{i,j} \, g(Y_j) \right) + \sum_{j = 1}^r v_{i,j} \, \exstage_j, & \qquad i &= 1, \dots, r.
	\end{alignat}
\end{subequations}
The only shared dependencies among the internal stages are the previously computed external stages $\exstage_j$.  This allows the IMEX method to inherit the parallelism of the base methods.

The tableau for a parallel IMEX GLM is of the form
\begin{equation} \label{eq:parallel_imex_glm_tableau}
	\begin{butchertableau}{c|c|c|c}
		\c & \zeros{s} & \lambda \, \eye{s} & \U \\ \hline
		& \BE & \BI & \V
	\end{butchertableau}.
\end{equation}
We note that one could more generally define $\AI = \diag{(\lambda_1, \dots, \lambda_s)}$, however, this introduces additional complexity and degrees of freedom that are not needed for the purposes of this paper.

\subsection{Simplified order conditions}
\label{sec:parallel_imex_glm:oc}

In this paper, we will consider methods with $p=q=r=s$, distinct $\c$ values (nonconfluent method), and an invertible $\U$.  By transforming the base methods into an equivalent formulation, we can then assume without loss of generality that $\U = \eye{s}$.  With these assumptions, we start by determining the structure of the external stage weights $\QE$ and $\QI$.

\begin{lemma} \label{lem:parallel_imex_glm_q}
	For a parallel IMEX GLM with $\U = \eye{s}$ and $p=q$, the internal stage order conditions \cref{eq:imex_glm_oc:int_e,eq:imex_glm_oc:int_i} are equivalent to
	\begin{equation} \label{eq:parallel_imex_glm_q}
		\QE = \C{p+1},
		\quad \text{and} \quad
		\QI = \C{p+1} - \lambda \, \C{p+1} \, \shiftmat{p+1},
	\end{equation}
	respectively.
\end{lemma}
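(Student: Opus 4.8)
The plan is to substitute the parallel IMEX GLM structure directly into the two internal stage order conditions and solve for $\QE$ and $\QI$. Before doing so, I would record a dimension observation that does most of the work: the lemma assumes $p=q$, and the paper's standing assumptions give $r=s$, so $\QE,\QI \in \R^{s \times (p+1)}$ while the truncations $\QE_{:,0:q}$ and $\QI_{:,0:q}$ retain the first $q+1 = p+1$ columns. Hence the truncation is vacuous, $\QE_{:,0:q} = \QE$ and $\QI_{:,0:q} = \QI$, and the conditions become equalities between full matrices rather than relations on a subblock.

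Next I would insert $\AE = \zeros{s}$ and $\U = \eye{s}$ into \cref{eq:imex_glm_oc:int_e}. Because $\AE$ is the zero matrix, the term $\AE \, \C{p+1} \, \shiftmat{p+1}$ drops out, and $\U \, \QE_{:,0:q}$ collapses to $\QE$, leaving $\C{p+1} - \QE = \zeros{s}{(p+1)}$. This is exactly $\QE = \C{p+1}$, and since every manipulation is reversible, the explicit internal condition is equivalent to this identity.

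For the implicit half I would insert $\AI = \lambda \, \eye{s}$ and $\U = \eye{s}$ into \cref{eq:imex_glm_oc:int_i}. The middle term becomes $\lambda \, \C{p+1} \, \shiftmat{p+1}$ and the trailing term becomes $\QI$, so the condition reads $\C{p+1} - \lambda \, \C{p+1} \, \shiftmat{p+1} - \QI = \zeros{s}{(p+1)}$, which rearranges to the claimed $\QI = \C{p+1} - \lambda \, \C{p+1} \, \shiftmat{p+1}$. Again each step is an equivalence, so this settles both directions of the iff.

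There is no genuinely hard step here; the argument is pure substitution and bookkeeping once the tableau is plugged in. The single point that warrants care — and which I would state explicitly — is the role of the $p=q$ hypothesis in making the column truncation vacuous. Were $q = p-1$ instead, \cref{eq:imex_glm_oc:int_e,eq:imex_glm_oc:int_i} would pin down only the first $p$ columns of $\QE$ and $\QI$, and the clean closed forms of \cref{eq:parallel_imex_glm_q} would no longer be forced. Flagging that dependence is the substantive content of the proof.
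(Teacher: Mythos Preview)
Your proof is correct and follows exactly the paper's approach: direct substitution of $\AE = \zeros{s}$, $\AI = \lambda \, \eye{s}$, and $\U = \eye{s}$ into the internal stage order conditions. You spell out the role of the $p=q$ hypothesis in making the column truncation vacuous, which the paper leaves implicit, but the argument is otherwise identical.
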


\begin{proof}
	This follows directly from substituting $\AE = \zeros{s}$, $\AI = \lambda \, \eye{s}$, and $\U = \eye{s}$ into \cref{eq:imex_glm_oc:int_e,eq:imex_glm_oc:int_i}.
\end{proof}

Our main theoretical result on parallel IMEX GLMs is presented in \cref{thm:parallel_imex_glm_b} and provides a practical strategy for method derivation.

\begin{theorem}[Parallel IMEX GLM order conditions] \label{thm:parallel_imex_glm_b}
	Consider a nonconfluent parallel IMEX GLM with $\U = \eye{s}$.  All of the following are equivalent:
	\begin{enumerate}
		\item The method satisfies $p=q=r=s$.
		\item The explicit base method satisfies $p=q=r=s$ and
		\begin{subequations}
			\begin{align}
				\label{eq:parallel_imex_glm_b:ex:q}
				\QI &= \C{s+1} - \lambda \, \C{s+1} \, \shiftmat{s+1}, \\
				\label{eq:parallel_imex_glm_b:ex:b}
				\BI &= \BE - \lambda \, \C{s} \, \expshiftmat{s} \, \C{s}^{-1} + \lambda \, \V.
			\end{align}
		\end{subequations}
		\item The implicit base method satisfies $p=q=r=s$ and
		\begin{subequations}
			\begin{align}
				\QE &= \C{s+1}, \\
				\BE &= \BI + \lambda \, \C{s} \, \expshiftmat{s} \, \C{s}^{-1} - \lambda \, \V.
			\end{align}
		\end{subequations}
	\end{enumerate}
\end{theorem}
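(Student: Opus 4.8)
The plan is to translate both items into the order conditions of \cref{thm:imex_glm_oc} with $p=q=s$ (note that $\U = \eye{s}$ forces $r=s$), and to exploit that these conditions decouple into an explicit pair \cref{eq:imex_glm_oc:int_e,eq:imex_glm_oc:ext_e} and an implicit pair \cref{eq:imex_glm_oc:int_i,eq:imex_glm_oc:ext_i}. The explicit pair is exactly the order-$s$, stage-order-$s$ conditions of the explicit base method, and the implicit pair those of the implicit base method, so by \cref{thm:imex_glm_oc} item (1) is equivalent to all four conditions holding at once. \cref{lem:parallel_imex_glm_q} collapses the two internal conditions to $\QE = \C{s+1}$ and $\QI = \C{s+1} - \lambda\,\C{s+1}\,\shiftmat{s+1}$, which are precisely the $\Q$ formulas appearing in items (2) and (3). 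Hence the whole theorem reduces to a single claim: under the internal conditions and the explicit external condition \cref{eq:imex_glm_oc:ext_e}, the implicit external condition \cref{eq:imex_glm_oc:ext_i} is equivalent to \cref{eq:parallel_imex_glm_b:ex:b}.

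To prove that claim I would subtract \cref{eq:imex_glm_oc:ext_e} from \cref{eq:imex_glm_oc:ext_i}: since \cref{eq:imex_glm_oc:ext_e} is assumed to hold, \cref{eq:imex_glm_oc:ext_i} holds if and only if this difference vanishes. Substituting $\QI - \QE = -\lambda\,\C{s+1}\,\shiftmat{s+1}$ and using that $\shiftmat{s+1}$ commutes with $\expshiftmat{s+1} = \exp(\shiftmat{s+1})$, I can factor $\shiftmat{s+1}$ out on the right to obtain
\begin{equation*}
	\left[ (\BE - \BI)\,\C{s+1} - \lambda\left(\C{s+1}\,\expshiftmat{s+1} - \V\,\C{s+1}\right) \right] \shiftmat{s+1} = \zeros{s}{(s+1)}.
\end{equation*}

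The main obstacle is that $\shiftmat{s+1}$ is singular and cannot be cancelled; right multiplication by it constrains only the first $s$ columns of the bracketed factor. I would clear this hurdle with the column identities
\begin{equation*}
	\C{s+1}\,\shiftmat{s+1} = \begin{bmatrix} \zeros{s}{1} & \C{s} \end{bmatrix}, \qquad \left( \C{s+1}\,\expshiftmat{s+1} \right)_{:,0:s-1} = \C{s}\,\expshiftmat{s},
\end{equation*}
where the first holds because right multiplication by $\shiftmat{s+1}$ prepends a zero column and drops the last, and the second because the $k$-th column of $\C{n}\,\expshiftmat{n}$ equals $(\c + \one)^k / k!$ regardless of $n$. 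The displayed equation asserts that the first $s$ columns of the bracket vanish, so it is equivalent to
\begin{equation*}
	(\BE - \BI)\,\C{s} = \lambda\left(\C{s}\,\expshiftmat{s} - \V\,\C{s}\right).
\end{equation*}

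Nonconfluence makes the scaled Vandermonde matrix $\C{s}$ invertible, so the last equation is equivalent to $\BI = \BE - \lambda\,\C{s}\,\expshiftmat{s}\,\C{s}^{-1} + \lambda\,\V$, which is \cref{eq:parallel_imex_glm_b:ex:b}; this settles $(1)\Leftrightarrow(2)$. The equivalence $(1)\Leftrightarrow(3)$ follows from the identical computation with the explicit and implicit roles swapped, now assuming the implicit external condition \cref{eq:imex_glm_oc:ext_i} and deriving the same reduced identity, whose rearrangement $\BE = \BI + \lambda\,\C{s}\,\expshiftmat{s}\,\C{s}^{-1} - \lambda\,\V$ is the $\BE$ relation of item (3). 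Everything besides the singular-shift step is routine manipulation of the Vandermonde and shift matrices, so that cancellation is the only point requiring care.
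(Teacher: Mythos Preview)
Your proposal is correct and follows essentially the same route as the paper: both subtract the two external order conditions, observe that the resulting $s\times(s+1)$ identity has a trivial first column (equivalently, a right factor of $\shiftmat{s+1}$), strip it to an $s\times s$ identity, and invert the nonconfluent Vandermonde $\C{s}$. The only cosmetic difference is that you package the argument as a single biconditional (given \cref{eq:imex_glm_oc:ext_e}, \cref{eq:imex_glm_oc:ext_i} $\Leftrightarrow$ \cref{eq:parallel_imex_glm_b:ex:b}), whereas the paper proves $(1)\Rightarrow(2)$ by the subtraction and $(2)\Rightarrow(1)$ by a direct verification of \cref{eq:imex_glm_oc:ext_i}; the underlying computation is the same.
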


\begin{remark}
	With \cref{thm:parallel_imex_glm_b}, once the implicit base method has been chosen, \textit{all} coefficients for the explicit counterpart are uniquely determined by the order conditions.  Conversely, if the explicit base is fixed, then all implicit method coefficients are uniquely determined, but parameterized by $\lambda$.
\end{remark}

\begin{proof}
	To start, we will show the first statement of \cref{thm:parallel_imex_glm_b} is equivalent to the second.  Assume a nonconfluent parallel IMEX GLM with $\U = \eye{s}$ has $p=q=r=s$.  By \cref{thm:imex_glm_oc}, the explicit (and implicit) base method also has $p=q=r=s$ and satisfies the order conditions in \cref{eq:imex_glm_oc}.  Further, by \cref{lem:parallel_imex_glm_q}, \cref{eq:parallel_imex_glm_b:ex:q} holds.  Subtracting \cref{eq:imex_glm_oc:ext_i} from \cref{eq:imex_glm_oc:ext_e} gives
	\begin{equation} \label{eq:imex_glm_ext_diff}
		\lambda \, \C{s+1} \, \shiftmat{s+1} \, \expshiftmat{s+1} + \left( \BI - \BE \right) \C{s+1} \, \shiftmat{s+1} - \lambda \, \V \, \C{s+1} \, \shiftmat{s+1} = \zeros{s}{(s+1)}.
	\end{equation}
	The three terms summed on the left-hand side of \cref{eq:imex_glm_ext_diff} have zeros in the leftmost column.  Removing this yields the following equivalent statement:
	\begin{equation*}
		\lambda \, \C{s} \, \expshiftmat{s} + \left( \BI - \BE \right) \C{s} - \lambda \, \V \, \C{s} = \zeros{s}.
	\end{equation*}
	A bit of algebraic manipulation recovers the desired result of \cref{eq:parallel_imex_glm_b:ex:b}.
	
	Now assume a nonconfluent parallel IMEX GLM with $\U = \eye{s}$ satisfies the properties of the second statement of \cref{thm:parallel_imex_glm_b}.  Condition \cref{eq:parallel_imex_glm_b:ex:q} ensures the implicit method has stage order $q$, and \cref{eq:parallel_imex_glm_b:ex:b} ensure its has order $p$:
	\begin{align*}
		& \quad \QI \, \expshiftmat{s+1} - \BI \, \C{s+1} \, \shiftmat{s+1} - \V \, \QI \\
		&= \QI \, \expshiftmat{s+1} - \left( \BE - \lambda \, \C{s} \, \expshiftmat{s} \, \C{s}^{-1} + \lambda \, \V \right) \, \C{s+1} \, \shiftmat{s+1} - \V \, \QI \\
		&= \left( \QI - \lambda \, \C{s+1} \, \shiftmat{s+1} \right) \, \expshiftmat{s+1} - \BE \, \C{s+1} \, \shiftmat{s+1} - \V \left( \QI - \lambda \, \C{s+1} \, \shiftmat{s+1} \right) \\
		&= \QE \, \expshiftmat{s+1} - \BE \, \C{s+1} \, \shiftmat{s+1} - \V \, \QE \\
		&= \zeros{s}{(s+1)}.
	\end{align*}
	Now both base methods have $p=q=r=s$, so by \cref{thm:imex_glm_oc}, the combined IMEX scheme also has $p=q=r=s$.
	
	The process to show statement one is equivalent to statement three, thus completing the proof, follows nearly identical steps, and is therefore omitted.
\end{proof}

\subsection{Stability}

Applying parallel IMEX GLMs to linear stability test \cref{eq:linear_ode} gives
\begin{subequations} \label{eq:parallel_imex_glm_stability}
	\begin{align}
		\M(w, \widehat{w})
		&= \V + \frac{w}{1 - \lambda \, \widehat{w}} \, \BE \, \U + \frac{\widehat{w}}{1 - \lambda \, \widehat{w}} \, \BI \, \U \\
		&= \ME \mleft( \frac{w}{1 - \lambda \, \widehat{w}} \mright) + \MI(\widehat{w}) - \V,
	\end{align}
\end{subequations}
where $\ME(w)$ and $\MI(\widehat{w})$ are the stability matrices of the explicit and implicit base methods, respectively.  When the implicit partition becomes infinitely stiff,
\begin{equation*}
	\M(w, \infty) = \MI(\infty) = \V - \frac{1}{\lambda} \BI \, \U.
\end{equation*}
Stability matrices evaluated at $\infty$ are understood to be the value in the limit.

\subsection{Starting procedure}

The starting procedure for nontrivial IMEX GLMs is more complex than traditional GLMs because the external stages for IMEX GLMs weight time derivatives of $f$ and $g$ differently.  When computing $\exstage[0]$, the high order time derivatives are usually not readily available, but can be approximated by finite differences \cite{Califano2017,Sandu_2014_IMEX-GLM}.  A one-step method can be used to get very accurate approximations to $y$, and consequently $f$ and $g$, at a grid of time points around $t_0$ to construct these finite difference approximations.  While this generic approach is applicable to parallel IMEX GLMs, we also describe a specialized strategy that is simpler and more accurate.

Based on the $\QE$ and $\QI$ weights derived in \cref{eq:parallel_imex_glm_q},
\begin{equation} \label{eq:alternate_external_stages}
	\begin{split}
		\exstage[0]_i &= y(t_0) +  \sum_{k=1}^{p} \frac{c_i^k}{k!} \, h^k \, \dv[k-1]{f(y(t))}{t} (t_0) \\
		& \quad + \sum_{k=1}^{p} \left( \frac{c_i^k}{k!} - \frac{\lambda \, c_i^{k-1}}{(k-1)!} \right) h^k \, \dv[k-1]{g(y(t))}{t} (t_0) +  \order{h^{p+1}} \\
		&= y(t_0 + h \, c_i) - h \, \lambda \, g(y(t_0 + h \, c_i)) + \order{h^{p+1}}.
	\end{split}
\end{equation}
Now, a one-step method can be used to get approximations to $y$ and $g$ at  times $t_0 + h \, c_i$ to compute $\exstage[0]$.  This eliminates the need to use finite differences and eliminates the error associated with them.  Note that negative abscissae would require integrating backwards in time.  Although the interval of integration may be quite short, this could still lead to stability issues, and is easily remedied.   If $c_\text{min}$ is the smallest abscissa, then the one-step method can produce an approximation to $\exstage[\ell]$, where $\ell = \ceil{-c_\text{min}}$, instead of $\exstage[0]$.  Note, $t_\ell + c_i \, h \ge t_0$, and the IMEX GLM will start with $\exstage[\ell]$ to compute $\exstage[\ell+1]$ and so on.

\subsection{Ending procedure}

We will consider the ending procedure for an IMEX GLM to be of the form
\begin{equation}
	y(t_n) \approx h \sum_{j=1}^s \left( \beta_{j} \, f(Y_j) + \widehat{\beta}_{j} \, g(Y_j) \right) + \sum_{j = 1}^r \gamma_{j} \, \exstage_j.
\end{equation}
Frequently, IMEX GLMs have the last abscissa set to $1$, which allows for a particularly simple ending procedure for high stage order methods.  The final internal stage $Y_s$ can be used as an $\order{h^{\min(p, q+1)}}$ accurate approximation to $y(t_n)$.  One can easily verify that the coefficients for such an ending procedure are
\begin{equation} \label{eq:basic_ending_procedure}
	\beta^T = e_s^T \, \AE,
	\quad
	\widehat{\beta}^T = e_s^T \, \AI,
	\quad
	\gamma^T = e_s^T \, \U,
\end{equation}
where $e_i$ is the $i$-th column of $\eye{s}$. Indeed, all parallel IMEX GLMs tested in this paper have $c_s=1$, however, we present an alternative strategy to approximate $y(t_n)$.  Suppose a parallel IMEX GLM has $c_i=0$ for some $i \in \left\{ 1, \dots, s - 1 \right\}$ and $c_s=1$.  Then based on the relation in \cref{eq:alternate_external_stages}, we have that
\begin{align*}
	&\quad h \sum_{j=1}^s \left( b_{i,j} \, f(Y_j) + \widehat{b}_{i,j} \, g(Y_j) \right) + h \, \lambda \, g(\instage_s) + \sum_{j = 1}^r v_{i,j} \, \exstage_j \\
	&= \exstage[n]_i + h \, \lambda \, g(\instage_s) \\
	&= y(t_n) + \order{h^{\min(p, q+1)}}.
\end{align*}
This ending procedure has the coefficients
\begin{equation} \label{eq:alternate_ending_procedure}
	\beta^T = e_i^T \, \BE,
	\quad
	\widehat{\beta}^T = e_i^T \, \BI + \lambda \, e_s^T,
	\quad
	\gamma^T = e_i^T \, \V.
\end{equation}

For the parallel ensemble IMEX Euler methods of \cref{sec:parallel_ensemble}, numerical tests revealed this new ending procedure is substantially more accurate.  For the parallel IMEX DIMSIMs, the coefficients in \cref{eq:basic_ending_procedure,eq:alternate_ending_procedure} gave similar results in tests as the accumulated global error dominated the local truncation error of the ending procedure.
\section{Parallel IMEX DIMSIMs}
\label{sec:parallel_imex_dimsim}

Diagonally-implicit multi-stage integration methods (DIMSIMs) have become a popular choice of base method to build high-order IMEX GLMs.  IMEX DIMSIMs are characterized by the following structural assumptions:
\begin{enumerate}
	\item $\AE$ is strictly lower triangular, and $\AI$ is lower triangular with the same element $\lambda$ on the diagonal as in \cref{eq:a_coeff}.
	\item $\V$ is rank one with the single nonzero eigenvalue equal to one to ensure preconsistency.
	\item $q \in \{p, p-1\}$ and $r \in \{s, s+1\}$.
\end{enumerate}

Based on \cref{thm:imex_glm_oc}, to build a parallel IMEX DIMSIM with $p=q=r=s$ we only need to choose one of the base methods and the rest of the coefficients will follow.  If we start by picking an explicit base, it may be difficult to ensure the resulting implicit method has acceptable stability properties, ideally L-stability.  Instead, we start by picking a stable, type 4 DIMSIM for the implicit base method.

In \cite{butcher1993general,butcher1997order}, Butcher developed a systematic approach to construct DIMSIMs of type 4 with ``perfect damping at infinity.''  One of his primary results is presented in \cref{thm:dimsim_coeffs}.

\begin{theorem}[Type 4 DIMSIM coefficients {\cite[Theorem 4.1]{butcher1997order}}] \label{thm:dimsim_coeffs}
	For the type 4 DIMSIM
	\begin{equation*}
		\begin{butchertableau}{c|c|c}
			\c & \lambda \, \eye{s} &  \eye{s} \\ \hline
			& \BI & \V
		\end{butchertableau}
	\end{equation*}
	with $p=q=r=s$ and $\V \, \one_s = \one_s$, the transformed coefficients
	\begin{equation*}
		\overline{\B} = \mathbf{T}^{-1} \, \BI \, \mathbf{T},
		\qquad
		\overline{\V} = \mathbf{T}^{-1} \, \V \, \mathbf{T},
	\end{equation*}
	satisfy
	\begin{subequations}
		\begin{align}
			\label{eq:dimsim_coeffs:v}
			\overline{\V} e_1 &= e_1, \\
			\label{eq:dimsim_coeffs:b}
			\overline{\B} &= \hessmat{s} - \lambda \, \expshiftmat{s} + \overline{\V} \left(\lambda \, \eye{s} - \shiftmat{s}^T \right),
		\end{align}
	\end{subequations}
	where 
	\begin{equation*}
		\mathbf{T} = \begin{bmatrix}
			P^{(s)}(c_1) & P^{(s-1)}(c_1) & \cdots & P'(c_1) \\
			P^{(s)}(c_2) & P^{(s-1)}(c_2) & \cdots & P'(c_2) \\
			\vdots & \vdots & \ddots & \vdots \\
			P^{(s)}(c_s) & P^{(s-1)}(c_s) & \cdots & P'(c_s) \\
		\end{bmatrix}, \qquad
		P(x) = \frac{1}{s!} \prod_{i=1}^s (x-c_i),
	\end{equation*}
	and
	\begin{equation*}
		\hessmat{n} = \begin{bmatrix}
			1 & \frac{1}{2} & \frac{1}{6} & \cdots & \frac{1}{(n-1)!} & \frac{1}{n!} \\
			1 & 1 & \frac{1}{2} & \cdots & \frac{1}{(n-2)!} & \frac{1}{(n-1)!} \\
			0 & 1 & 1 & \cdots & \frac{1}{(n-3)!} & \frac{1}{(n-2)!} \\
			\vdots & \vdots & \vdots & & \vdots & \vdots \\
			0 & 0 & 0 & \cdots & 1 & \frac{1}{2} \\
			0 & 0 & 0 & \cdots & 1 & 1
		\end{bmatrix} \in \R^{n \times n}.
	\end{equation*}
\end{theorem}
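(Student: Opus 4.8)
The plan is to specialize the implicit order conditions of \cref{thm:imex_glm_oc} to the type~4 structure $\AI = \lambda\,\eye{s}$, $\U = \eye{s}$, $p=q=r=s$, and then track how the similarity transform $\mathbf{T}$ acts on them. By \cref{lem:parallel_imex_glm_q} the internal conditions \cref{eq:imex_glm_oc:int_i} force $\QI = \C{s+1} - \lambda\,\C{s+1}\,\shiftmat{s+1}$ (this is \cref{eq:parallel_imex_glm_q}), while the external condition \cref{eq:imex_glm_oc:ext_i} reads $\QI\,\expshiftmat{s+1} - \BI\,\C{s+1}\,\shiftmat{s+1} - \V\,\QI = \zeros{s}{(s+1)}$. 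Since $\C{s+1}\,\shiftmat{s+1} = \begin{bmatrix}\zeros{s}{1} & \C{s}\end{bmatrix}$, the first column of this identity is just the preconsistency relation $\V\,\one_s = \one_s$ and carries no information about $\BI$; the remaining $s$ columns, together with invertibility of the scaled Vandermonde block $\C{s}$ (guaranteed by nonconfluence), determine $\BI = G\,\C{s}^{-1}$ uniquely, where $G$ denotes columns $2,\dots,s+1$ of $\QI\,\expshiftmat{s+1} - \V\,\QI$.

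For \cref{eq:dimsim_coeffs:v} I would first note that $P$ has degree $s$ with leading coefficient $1/s!$, so $P^{(s)} \equiv 1$ and the first column of $\mathbf{T}$ equals $\one_s$, i.e. $\mathbf{T}\,e_1 = \one_s$. Preconsistency then gives $\overline{\V}\,e_1 = \mathbf{T}^{-1}\,\V\,\mathbf{T}\,e_1 = \mathbf{T}^{-1}\,\V\,\one_s = \mathbf{T}^{-1}\,\one_s = e_1$, as claimed. The key structural fact for \cref{eq:dimsim_coeffs:b} is the factorization $\mathbf{T} = \C{s}\,\mathbf{R}$, where $\mathbf{R}$ is the unit upper-triangular matrix whose $j$-th column holds the scaled Taylor coefficients of the degree-$(j-1)$ polynomial $P^{(s-j+1)}$. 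Because adjacent columns of $\mathbf{R}$ are related by differentiation, which acts on Taylor-coefficient vectors as the nilpotent shift $\shiftmat{s}$, the matrix $\mathbf{R}$ commutes with $\shiftmat{s}$ and hence with $\expshiftmat{s} = \exp(\shiftmat{s})$; in particular $\mathbf{T}$ is invertible. Conjugation therefore collapses $\overline{\B} = \mathbf{R}^{-1}\,\C{s}^{-1}\,G\,\mathbf{R}$ term by term.

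Expanding $G$ with $\QI = \C{s+1}\left(\eye{s+1} - \lambda\,\shiftmat{s+1}\right)$ and using the two shift identities $\C{s+1}\,\shiftmat{s+1} = \begin{bmatrix}\zeros{s}{1} & \C{s}\end{bmatrix}$ and $\left(\C{s}\,\expshiftmat{s}\right)_{i,\ell} = (c_i+1)^{\ell-1}/(\ell-1)!$ (the scaled Vandermonde at the translated nodes $\c + \one_s$), I would reduce the four pieces of $G$ to $\C{s}$ times explicit matrices. Conjugating by $\mathbf{R}$, the piece $-\lambda\,\C{s}\,\expshiftmat{s}$ becomes $-\lambda\,\expshiftmat{s}$ since $\mathbf{R}$ commutes with $\expshiftmat{s}$, the piece $+\lambda\,\V\,\C{s}$ becomes $+\lambda\,\overline{\V}$, and the two shifted-Vandermonde pieces are to produce $\hessmat{s}$ and $-\overline{\V}\,\shiftmat{s}^T$. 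Summing these four contributions gives exactly $\overline{\B} = \hessmat{s} - \lambda\,\expshiftmat{s} + \overline{\V}\left(\lambda\,\eye{s} - \shiftmat{s}^T\right)$.

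The step I expect to be hardest is the reduction of the top powers $\c^{\,s}/s!$ and $(\c+\one_s)^{s}/s!$, which appear in the last columns of the shifted-Vandermonde pieces, back into the span of the lower powers. Since every $c_i$ is a root of $P$, one has $\C{s}^{-1}\left(\c^{\,s}/s!\right)$ equal to the (negated) lower Taylor coefficients of $P$, so that $\C{s}^{-1}\,\C{s+1}|_{2:s+1} = \shiftmat{s}^T + p_0\,e_s^T$ is precisely the companion matrix of $P$. The remaining work is the identity $\mathbf{R}^{-1}\left(\shiftmat{s}^T + p_0\,e_s^T\right)\mathbf{R} = \shiftmat{s}^T$, i.e. that $\mathbf{R}$ conjugates this companion matrix to the pure nilpotent shift; conceptually this says that, in the basis of derivatives of $P$, multiplication by $x$ modulo $P$ is the shift. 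An entirely analogous reduction at the translated nodes $\c + \one_s$ yields the Hessenberg matrix $\hessmat{s}$ rather than the shift. Verifying these two companion-to-shift reductions and matching the precise off-diagonal entries of $\hessmat{s}$ is where the bulk of the careful bookkeeping lies.
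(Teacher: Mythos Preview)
The paper does not prove this theorem; it is quoted verbatim from Butcher's 1997 paper and used as a black box, so there is no in-paper argument to compare against. Your proposal is therefore an attempt to supply a proof the authors chose to cite rather than reproduce. The setup is sound: the derivation of \cref{eq:dimsim_coeffs:v} from $\mathbf{T}e_1=\one_s$ and preconsistency is correct, the factorization $\mathbf{T}=\C{s}\mathbf{R}$ with $\mathbf{R}$ unit upper triangular is correct, and $\mathbf{R}$ does commute with $\shiftmat{s}$ and $\expshiftmat{s}$ for exactly the reason you give.

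There is, however, a genuine error in the step you flagged as hardest. The identity $\mathbf{R}^{-1}(\text{companion of }P)\,\mathbf{R}=\shiftmat{s}^T$ is \emph{false} in general. In the scaled-monomial basis the companion matrix represents the antidifferentiation operator $p\mapsto\int_0^x p$, reduced modulo $P$; applying it to the coefficient vector of $P^{(s-j+1)}$ yields the coefficient vector of $P^{(s-j)}(x)-P^{(s-j)}(0)$, not of $P^{(s-j)}$ alone. Hence
\[
\mathbf{R}^{-1}(\text{companion})\,\mathbf{R}\;=\;\shiftmat{s}^T \;-\; e_1\,p^T,
\qquad p=\bigl(P^{(s-1)}(0),\,P^{(s-2)}(0),\,\dots,\,P(0)\bigr)^T,
\]
which already for $s=2$, $\c=(0,1)^T$ gives $\begin{bmatrix}1/2&0\\1&0\end{bmatrix}\neq\shiftmat{2}^T$. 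Your conceptual justification (``multiplication by $x$ modulo $P$ is the shift in the basis of derivatives of $P$'') overlooks these integration constants.

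The good news is that the argument is salvageable with essentially no extra work. The same rank-one correction $-e_1 p^T$ appears, with the opposite sign, in the piece you want to reduce to $\hessmat{s}$: one finds $\mathbf{R}^{-1}\C{s}^{-1}\bigl(\C{s+1}\expshiftmat{s+1}\bigr)_{:,2:s+1}\mathbf{R}=\hessmat{s}-e_1 p^T$. When you combine the two ``shifted-Vandermonde'' contributions the corrections cancel precisely because $\overline{\V}e_1=e_1$, giving $\hessmat{s}-\overline{\V}\shiftmat{s}^T$ as required. So your four-piece decomposition and the overall route are fine; only the two intermediate targets need to be stated with the $-e_1 p^T$ term, and the cancellation should be made explicit via \cref{eq:dimsim_coeffs:v}.
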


\Cref{thm:dimsim_coeffs} fully determines the $\BI$ coefficient for a type 4 DIMSIM, but $\c$, $\lambda$ and most of $\V$ remain undetermined.  Fortunately, this offers sufficient degrees of freedom to ensure $\MI(\infty)$ is nilpotent.  In \cite[Theorem 5.1]{butcher1997order}, Butcher proves $\lambda$ must be a solution to
\begin{subequations} \label{eq:dimsim_stability_coeffs}
	\begin{equation} \label{eq:dimsim_stability_coeffs:lambda}
		L'_{s+1} \mleft( \frac{s+1}{\lambda} \mright) = 0,
	\end{equation}
	and
	\begin{equation} \label{eq:dimsim_stability_coeffs:v}
		\overline{\V} = \begin{bmatrix}
			v_1 & v_2 & \cdots & v_s \\
			0 & 0 & \cdots & 0 \\
			\vdots & \vdots & \ddots & \vdots \\
			0 & 0 & \cdots & 0
		\end{bmatrix}, \qquad
		v_i = (-1)^{s+1} \frac{s-i+2}{s+1} \lambda^{i-1} L_{s+1}^{(s-i+2)} \mleft( \frac{s+1}{\lambda} \mright).
	\end{equation}
\end{subequations}
Here, $L_n(x) = \sum_{i=0}^n \binom{n}{i} (-x)^i / i!$ is the Laguerre polynomial and $L_n^{(m)}(x)$ is its $m$-th derivative.

With the implicit base method determined, we now turn to the explicit method.  Indeed, \cref{thm:imex_glm_oc} could be applied to recover $\BE$, but \cref{thm:dimsim_coeffs} provides a more direct approach.  \Cref{eq:dimsim_coeffs:b}, which is normally used for type 4 methods, remains valid when $\lambda=0$, and \cref{eq:dimsim_coeffs:v} is fulfilled because the implicit and explicit base methods share $\V$.

In summary, the coefficients for a parallel IMEX DIMSIM with $p=q=r=s$ are given by
\begin{alignat*}{2}
	\AE &= \zeros{s}, \qquad &
	\BE &= \mathbf{T} \left( \hessmat{s} - \lambda \, \expshiftmat{s} + \overline{\V} \left(\lambda \, \eye{s} - \shiftmat{s}^T \right) \right) \mathbf{T}^{-1}, \\
	\AI &= \lambda \, \eye{s}, \qquad &
	\BI &= \mathbf{T} \left( \hessmat{s} - \overline{\V} \, \shiftmat{s}^T \right) \mathbf{T}^{-1}, \\
	\U &= \eye{s}, \qquad &
	\V &= \mathbf{T} \, \overline{\V} \, \mathbf{T}^{-1},
\end{alignat*}
with $\c$ remaining as free parameters.  The two most ``natural'' and frequently used choices are $\c = [0,1/(s-1),2/(s-2),\dots,1]^T$ and $\c = [2-s,1-s,\dots,1]^T$.  This presents a tradeoff where the first option has smaller local truncation errors, but the second option results  in  coefficients that grow slower with order, thus reducing the accumulation of finite precision cancellation errors.  \Cref{tab:imex_dimsim_max_coeff} presents the magnitude of these largest coefficients for both strategies.

\begin{table}
	\centering
	\begin{tabular}{c|c|c|c}
		Method order & $\lambda$ & $c_i = \frac{i-1}{s-1}$ & $c_i = 1 - s + i$ \\ \hline
		2 & 0.633975 & 1.38 & 1.38 \\
		3 & 1.21014 & 20.38 & 7.31 \\
		4 & 0.872421 & 90.86 & 7.07 \\
		5 & 1.30128 & 5885.22 & 29.74 \\
		6 & 1.80569 & 933038.32 & 368.93 \\
		7 & 1.35220 & 10318974.86 & 303.07 \\
		8 & 1.73680 & 2557191349.96 & 3534.00 \\
		9 & 1.38470 & 41543982719.05 & 2907.22 \\
		10 & 1.69561 & 14146161438042.40 & 41813.39
	\end{tabular}
	\caption{Approximate values for the largest coefficient in absolute value from $\BE$, $\BI$, and $\V$ for parallel IMEX DIMSIMs of orders two to ten.} \label{tab:imex_dimsim_max_coeff}
\end{table}

Before proceeding to the stability analysis, we present two examples of parallel IMEX DIMSIMs.  A second order method has the tableau
\begin{equation*}
	\begin{butchertableau}{c|cc|cc|cc}
		0 & 0 & 0 & \lambda  & 0 & 1 & 0 \\
		1 & 0 & 0 & 0 & \lambda  & 0 & 1 \\
		\hline
		\text{} & \frac{4 \lambda -3}{4} & \frac{4 \lambda -3}{4} & \frac{(2 \lambda +1) (4 \lambda -3)}{4} & \frac{-8 \lambda ^2+10 \lambda -3}{4} & \frac{4 \lambda -3}{2} & \frac{5-4 \lambda }{2} \\
		\text{} & \frac{4 \lambda -5}{4} & \frac{4 \lambda +3}{4} & \frac{8 \lambda ^2+2 \lambda -5}{4} & \frac{-8 \lambda ^2+6 \lambda +3}{4} & \frac{4 \lambda -3}{2} & \frac{5-4 \lambda }{2} \\
	\end{butchertableau},
\end{equation*}
where $\lambda = (3 - \sqrt{3}) / 2$.  In a more compact form, a third order method has the coefficients
\begin{align*}
	\c &= \begin{bmatrix}
		0 & \frac{1}{2} & 1
	\end{bmatrix}^T, \\
	\BE &= \begin{bmatrix}
		\frac{6 \lambda ^2-15 \lambda +7}{2} & \frac{6 \lambda -5}{3} & -\frac{(3 \lambda -2) (6 \lambda -13)}{6} \\
		\frac{72 \lambda ^2-180 \lambda +89}{24} & \frac{6 \lambda -7}{3} & \frac{-24 \lambda ^2+68 \lambda -27}{8} \\
		\frac{(3 \lambda -4) (6 \lambda -7)}{6} & 2 \lambda -5 & \frac{-18 \lambda ^2+51 \lambda -7}{6} \\
	\end{bmatrix}, \\
	\BI &= \begin{bmatrix}
		\frac{72 \lambda ^3-156 \lambda ^2+34 \lambda +21}{6} & \frac{-72 \lambda ^3+192 \lambda ^2-88 \lambda -5}{3} & \frac{36 \lambda ^3-114 \lambda ^2+80 \lambda -13}{3} \\
		\frac{288 \lambda ^3-624 \lambda ^2+112 \lambda +89}{24} & \frac{-72 \lambda ^3+192 \lambda ^2-79 \lambda -7}{3} & \frac{288 \lambda ^3-912 \lambda ^2+592 \lambda -81}{24} \\
		\frac{2 \left(18 \lambda ^3-39 \lambda ^2+4 \lambda +7\right)}{3} & \frac{-72 \lambda ^3+192 \lambda ^2-64 \lambda -15}{3} & \frac{72 \lambda ^3-228 \lambda ^2+130 \lambda -7}{6} \\
	\end{bmatrix}, \\
	\V &= \begin{bmatrix}
		1 \\ 1 \\ 1
	\end{bmatrix}
	\begin{bmatrix}
		\frac{72 \lambda ^2-174 \lambda +79}{6} & -\frac{2 \left(36 \lambda ^2-96 \lambda +47\right)}{3} & \frac{72 \lambda ^2-210 \lambda +115}{6} \\
	\end{bmatrix}, \\
	\lambda &= \frac{2 \cos \mleft(\frac{\pi }{18}\mright) \sec \mleft(\frac{\pi }{9}\mright)}{\sqrt{3}} \approx 1.210138312730603.
\end{align*}

\subsection{Stability}

While \cref{eq:dimsim_stability_coeffs} ensures $\rho( \MI(\infty) ) = 0$, it is not a sufficient condition for L-stability of a type 4 DIMSIM.  In \cite{butcher1997order}, appropriate values of $\lambda$ for L-stability are provided for orders two to ten, excluding nine.  If the weaker condition of L$(\alpha)$-stability is acceptable, smaller values of $\lambda$ may be used as well.

With $\c$ available as free parameters, it is natural to see if they can be used to optimize the stability of parallel IMEX DIMSIMs.  It is easy to verify that stability is, in fact, independent of $\c$:
\begin{align*}
	\mathbf{T}^{-1} \, \M(w, \widehat{w}) \, \mathbf{T} &= \overline{\V} + \frac{w}{1 - \lambda \, \widehat{w}} \left( \hessmat{s} - \overline{\V} \, \shiftmat{s}^T \right) \\
	& \quad + \frac{\widehat{w}}{1 - \lambda \, \widehat{w}} \left( \hessmat{s} - \lambda \, \expshiftmat{s} + \overline{\V} \left(\lambda \, \eye{s} - \shiftmat{s}^T \right) \right).
\end{align*}
The stability matrix is similar to a matrix completely independent of $\c$, thus $\c$ has no effect on the power boundedness of $\M$.

Plots of the constrained nonstiff stability region for several methods appear in \cref{fig:parallel_imex_dimsim_stability}.  Roughly speaking, the area of the stability region shrinks as the order increases.  Further, the smaller values of $\lambda$ satisfying \cref{eq:dimsim_stability_coeffs:lambda} tend to provide larger stability regions for a fixed order.

\begin{figure}[ht!]
	\centering
	\begin{subfigure}[b]{.45\linewidth}
		\includegraphics[width=\linewidth]{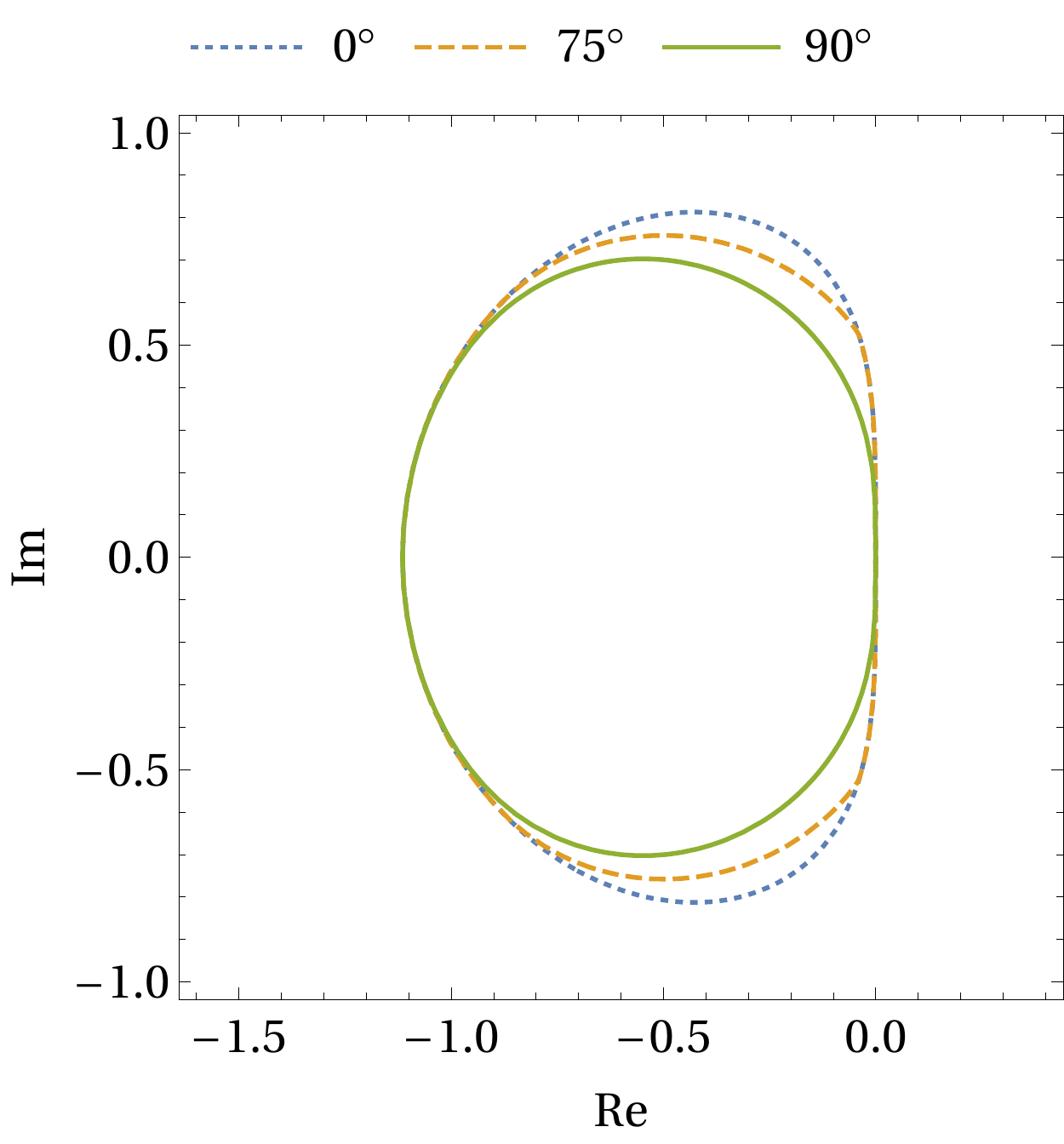}
		\caption{Second order with $\lambda \approx 0.633975$}
	\end{subfigure}
	\hfil
	\begin{subfigure}[b]{.45\linewidth}
		\includegraphics[width=\linewidth]{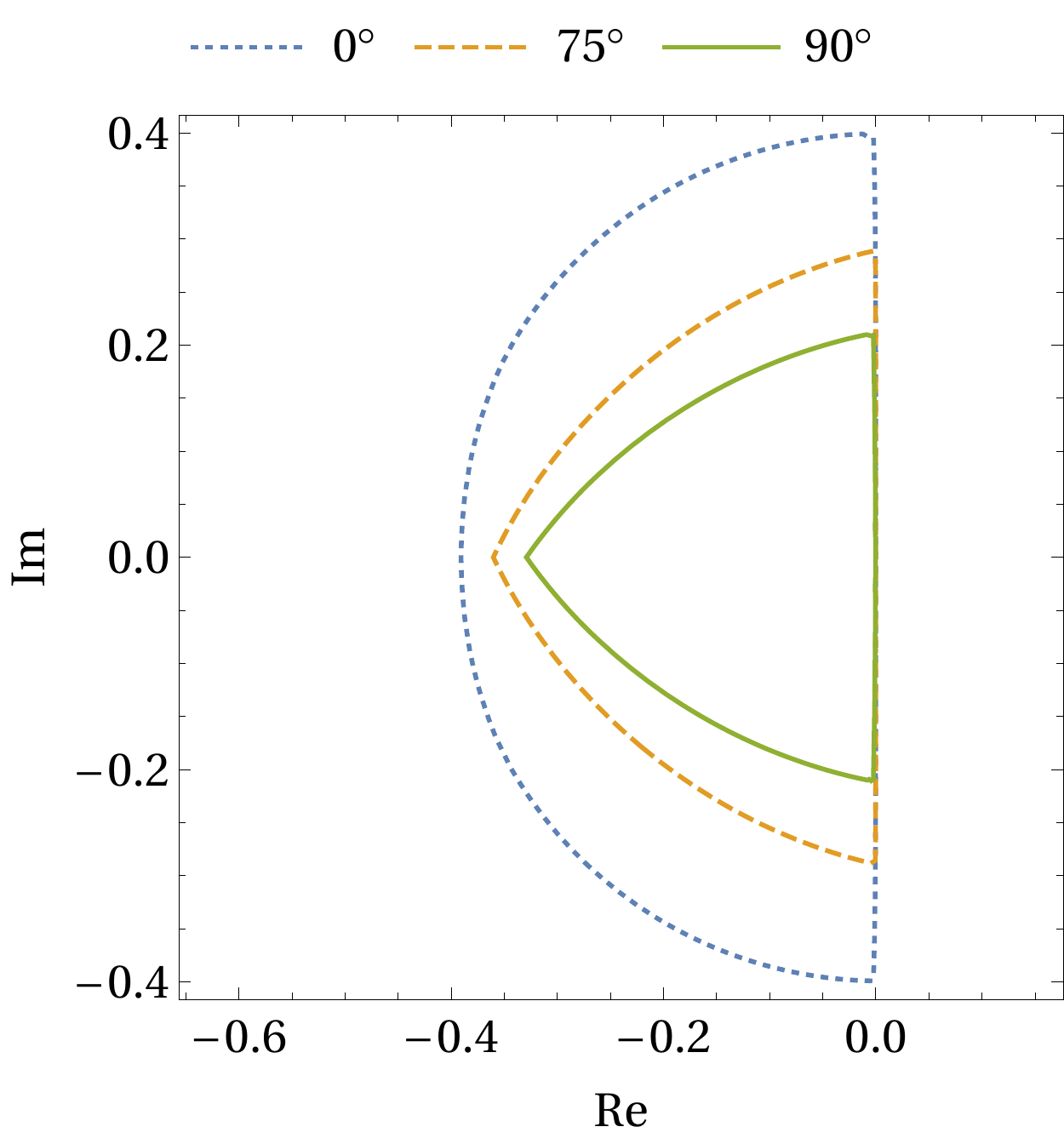}
		\caption{Fourth order with $\lambda \approx 0.872421$}
	\end{subfigure}
	\hfil
	\begin{subfigure}[b]{.45\linewidth}
		\includegraphics[width=\linewidth]{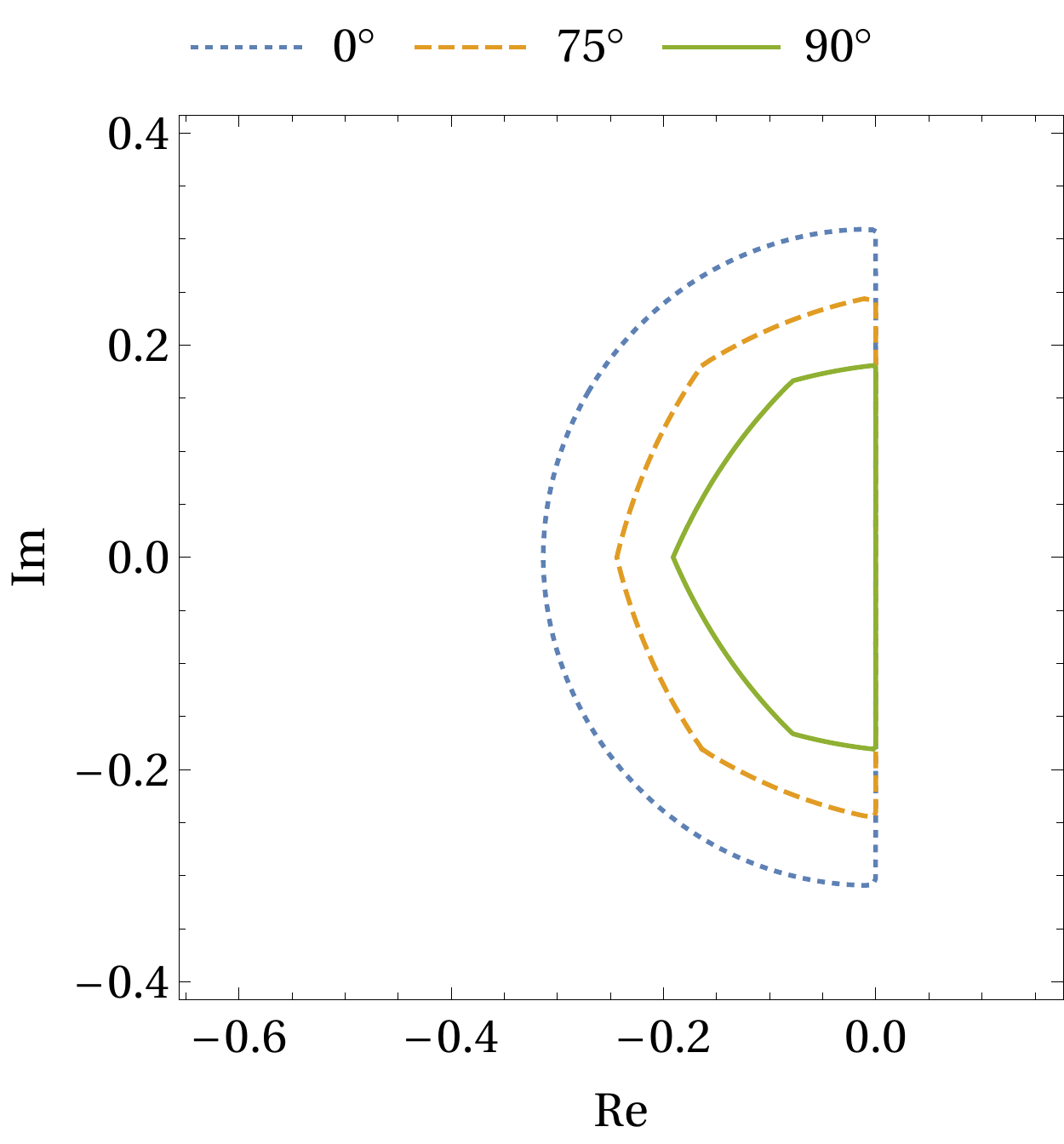}
		\caption{Seventh order with $\lambda \approx 1.35220$}
	\end{subfigure}
	\hfil
	\begin{subfigure}[b]{.45\linewidth}
		\includegraphics[width=\linewidth]{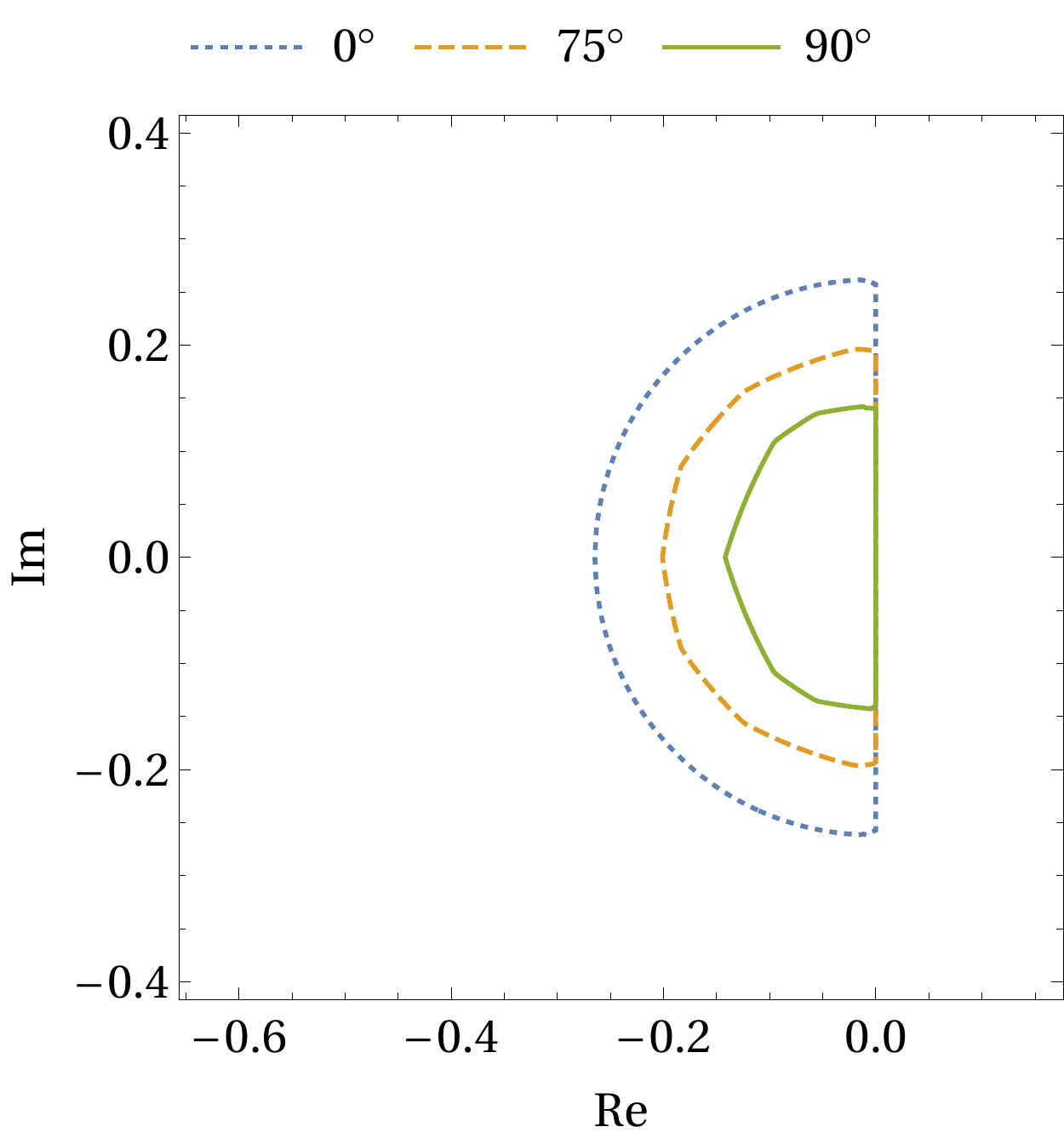}
		\caption{Tenth order with $\lambda \approx 1.69561$}
	\end{subfigure}
	\caption{Stability regions \sRegionNonstiff{} with $\alpha = \ang{0}, \ang{75}, \ang{90}$ for parallel IMEX DIMSIMs.  Note the scale for (a) is different than for the other plots.}
	\label{fig:parallel_imex_dimsim_stability}
\end{figure}

\section{Parallel ensemble IMEX Euler methods}
\label{sec:parallel_ensemble}

If one seeks to minimize communication costs for parallel IMEX GLMs, the choice $\U = \V = \eye{s}$ is attractive, as it eliminates the need to share external stages among parallel processes.  As we will show in this section, this choice of coefficients also leads to particularly favorable structures for the order conditions and stability matrix.

\begin{theorem}[Parallel ensemble IMEX Euler order conditions] \label{thm:parallel_ensemble_coeffs}
	A nonconfluent parallel ensemble IMEX Euler method, which starts with the structural assumptions
	\begin{equation*}
		\AE = \zeros{s}, \quad
		\AI = \lambda \, \eye{s}, \quad
		\U = \V = \eye{s},
	\end{equation*}
	has $p=q=r=s$ if and only if the remaining method coefficients are
	\begin{subequations} \label{eq:parallel_ensemble_coeffs}
		\begin{alignat}{2}
			\QE &= \C{s+1}, \qquad
			& \QI &= \C{s+1} - \lambda \, \C{s+1} \, \shiftmat{s+1}, \label{eq:parallel_ensemble_coeffs:q} \\
			\BE &= \C{s} \, \phishiftmat{s} \, \C{s}^{-1}, \quad
			&\BI &= \C{s} \, \phishiftmat{s} \left( \eye{s} - \lambda \, \shiftmat{s} \right) \C{s}^{-1},  \label{eq:parallel_ensemble_coeffs:b}
		\end{alignat}
	\end{subequations}
	where
	\begin{equation} \label{eq:phi_shift_mat}
		\def\arraystretch{1.2}
		\phishiftmat{n} = \begin{bmatrix}
			1 & \frac{1}{2} & \frac{1}{6} & \dots & \frac{1}{n!} \\
			& 1 & \frac{1}{2} & \dots & \frac{1}{(n-1)!} \\
			& & \ddots & \ddots & \vdots \\
			& & & 1 & \frac{1}{2} \\
			& & & & 1
		\end{bmatrix} \in \R^{n \times n}.
	\end{equation}
\end{theorem}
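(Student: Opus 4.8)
The plan is to treat this as the special case $\V = \eye{s}$ of the general parallel IMEX GLM of \cref{sec:parallel_imex_glm} and to lean on \cref{thm:parallel_imex_glm_b}, so that only the two coefficient matrices $\BE$ and $\BI$ need explicit evaluation. First I would observe that $\U = \eye{s}$ and $p = q$ are in force, so the $\QE$ and $\QI$ formulas in \cref{eq:parallel_ensemble_coeffs:q} follow at once from \cref{lem:parallel_imex_glm_q}. By \cref{thm:parallel_imex_glm_b}, the composite method attains $p=q=r=s$ if and only if the explicit base method does and the relations \cref{eq:parallel_imex_glm_b:ex:q,eq:parallel_imex_glm_b:ex:b} hold; substituting $\V = \eye{s}$ into these will be the engine of the whole argument.

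To pin down $\BE$, I would insert $\QE = \C{s+1}$ and $\V = \eye{s}$ into the explicit external order condition \cref{eq:imex_glm_oc:ext_e}, which collapses to $\C{s+1}(\expshiftmat{s+1} - \eye{s+1}) = \BE \, \C{s+1} \, \shiftmat{s+1}$. The structural observation driving the computation is that right multiplication by $\shiftmat{s+1}$ shifts columns and prepends a zero, giving $\C{s+1} \, \shiftmat{s+1} = [\,\zeros{s}{1} \mid \C{s}\,]$, while the first column of $\expshiftmat{s+1} - \eye{s+1}$ vanishes. Matching the remaining $s$ columns then reduces the matrix equation to the vector identities $\BE \, \c^{\ell-1}/(\ell-1)! = ((\one_s + \c)^{\ell} - \c^{\ell})/\ell!$ for $\ell = 1, \dots, s$, with all powers taken componentwise. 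A short binomial computation identifies the right-hand sides as precisely the columns of $\C{s} \, \phishiftmat{s}$, so that $\BE \, \C{s} = \C{s} \, \phishiftmat{s}$; since the method is nonconfluent, $\C{s}$ is invertible and $\BE = \C{s} \, \phishiftmat{s} \, \C{s}^{-1}$ is forced.

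For $\BI$ I would avoid repeating the argument and instead feed the new $\BE$ and $\V = \eye{s}$ into \cref{eq:parallel_imex_glm_b:ex:b}. Writing $\lambda \, \eye{s} = \lambda \, \C{s} \, \C{s}^{-1}$ gathers everything into $\BI = \C{s}(\phishiftmat{s} - \lambda \, \expshiftmat{s} + \lambda \, \eye{s})\C{s}^{-1}$. The final factorization rests on the single matrix identity $\expshiftmat{s} - \eye{s} = \phishiftmat{s} \, \shiftmat{s}$, which is immediate from comparing entries: both sides have $(i,j)$ entry $1/(j-i)!$ when $j > i$ and zero otherwise. Substituting it turns the bracketed factor into $\phishiftmat{s}(\eye{s} - \lambda \, \shiftmat{s})$ and delivers $\BI = \C{s} \, \phishiftmat{s}(\eye{s} - \lambda \, \shiftmat{s})\C{s}^{-1}$, which is \cref{eq:parallel_ensemble_coeffs:b}.

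Because \cref{lem:parallel_imex_glm_q} and \cref{thm:parallel_imex_glm_b} are stated as equivalences and the order conditions determine $\BE$ and $\BI$ uniquely once $\C{s}$ is known to be invertible, both directions of the claimed if-and-only-if are covered by the same chain of reasoning. The step I expect to be the crux is the binomial computation identifying $\C{s} \, \phishiftmat{s}$ with the column data $((\one_s + \c)^{\ell} - \c^{\ell})/\ell!$: the entries $1/(j-i+1)!$ of $\phishiftmat{s}$ are arranged exactly so that the Vandermonde convolution telescopes into a binomial sum missing only its top-order term, and getting that index bookkeeping right — together with its companion identity $\expshiftmat{s} - \eye{s} = \phishiftmat{s} \, \shiftmat{s}$ — is where the genuine content of the proof lies.
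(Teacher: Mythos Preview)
Your proposal is correct and follows essentially the same route as the paper: both reduce everything to \cref{thm:parallel_imex_glm_b} and \cref{lem:parallel_imex_glm_q}, and both hinge on the identity $\expshiftmat{n} - \eye{n} = \phishiftmat{n}\,\shiftmat{n}$. The only cosmetic difference is that the paper verifies the stated $\BE$ satisfies \cref{eq:imex_glm_oc:ext_e} by direct substitution, whereas you solve that condition for $\BE$ via the binomial expansion---this makes the uniqueness, and hence the ``only if'' direction, a bit more explicit than in the paper.
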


\begin{remark}
	An alternative representation for \cref{eq:phi_shift_mat} is $\phishiftmat{n} = \varphi_1(\shiftmat{n})$, where $\varphi_1$ is the entire function
	\begin{equation*}
		\varphi_1(z) = \sum_{k=0}^{\infty} \frac{z^k}{(k+1)!} = \frac{e^z - 1}{z}.
	\end{equation*}
\end{remark}

\begin{proof}
	With \cref{thm:parallel_imex_glm_b}, we need only show the explicit base method for parallel ensemble IMEX Euler has $p=q=r=s$ and $\BE$ and $\BI$ are related by \cref{eq:parallel_imex_glm_b:ex:b}.  By \cref{lem:parallel_imex_glm_q}, the internal stage order condition for the explicit method, given in \cref{eq:imex_glm_oc:int_e}, holds.  For the external stage order conditions:
	\begin{align*}
		\QE \, \expshiftmat{s+1} - \BE \, \C{s+1} \, \shiftmat{s+1} - \V \, \QE
		&= \C{s+1} \, \expshiftmat{s+1} - \C{s} \, \phishiftmat{s} \, \C{s}^{-1} \, \C{s+1} \, \shiftmat{s+1} - \C{s+1} \\
		&= \C{s+1} \, \expshiftmat{s+1} - \C{s+1} \, \phishiftmat{s+1} \, \shiftmat{s+1} - \C{s+1} \\
		&= \C{s+1} \left( \expshiftmat{s+1} - \phishiftmat{s+1} \, \shiftmat{s+1} - \eye{(s+1)} \right) \\
		&= \zeros{s}{(s+1)}.
	\end{align*}
	Therefore the explicit method satisfies all order conditions and has $p=q=r=s$.  Finally, 
	\begin{align*}
		\BE - \lambda \, \C{s} \, \expshiftmat{s} \, \C{s}^{-1} + \lambda \, \V
		&= \C{s} \, \phishiftmat{s} \, \C{s}^{-1} - \lambda \, \C{s} \, \expshiftmat{s} \, \C{s}^{-1} + \lambda \, \eye{s} \\
		&= \C{s} \left( \phishiftmat{s} - \lambda \, \expshiftmat{s} + \lambda \, \eye{s} \right) \C{s}^{-1} \\
		&= \C{s} \left( \phishiftmat{s} - \phishiftmat{s} \, \shiftmat{s} \right) \C{s}^{-1} \\
		&= \C{s} \, \phishiftmat{s} \left( \eye{s} - \lambda \, \shiftmat{s} \right) \C{s}^{-1} \\
		&= \BI,
	\end{align*}
	which completes the proof.
\end{proof}

While the parallel IMEX DIMSIMs of \cref{sec:parallel_imex_dimsim} require symbolic tools to derive and have coefficients that can be expressed as roots of polynomials, ensemble methods have simple, rational coefficients that can be derived with basic matrix multiplication.  The following parallel ensemble IMEX Euler method, for example, is second order:
\begin{equation*}
	\begin{butchertableau}{c|cc|cc|cc}
		0 & 0 & 0 & 1 & 0 & 1 & 0 \\
		1 & 0 & 0 & 0 & 1 & 0 & 1 \\
		\hline
		& \frac{1}{2} & \frac{1}{2} & \frac{3}{2} & -\frac{1}{2} & 1 & 0 \\
		& -\frac{1}{2} & \frac{3}{2} & \frac{1}{2} & \frac{1}{2} & 0 & 1 \\
	\end{butchertableau}.
\end{equation*}
A third order method is given by
\begin{equation*}
	\begin{butchertableau}{c|ccc|ccc|ccc}
		0 & 0 & 0 & 0 & 1 & 0 & 0 & 1 & 0 & 0 \\
		\frac{1}{2} & 0 & 0 & 0 & 0 & 1 & 0 & 0 & 1 & 0 \\
		1 & 0 & 0 & 0 & 0 & 0 & 1 & 0 & 0 & 1 \\
		\hline
		& \frac{1}{6} & \frac{2}{3} & \frac{1}{6} & \frac{7}{6} & \frac{2}{3} & -\frac{5}{6} & 1 & 0 & 0 \\
		& \frac{1}{6} & -\frac{1}{3} & \frac{7}{6} & -\frac{5}{6} & \frac{11}{3} & -\frac{11}{6} & 0 & 1 & 0 \\
		& \frac{7}{6} & -\frac{10}{3} & \frac{19}{6} & -\frac{11}{6} & \frac{14}{3} & -\frac{11}{6} & 0 & 0 & 1 \\
	\end{butchertableau},
\end{equation*}
and a fourth order method is given by
\begin{equation*}
	\begin{butchertableau}{c|cccc|cccc|cccc}
		0 & 0 & 0 & 0 & 0 & 1 & 0 & 0 & 0 & 1 & 0 & 0 & 0 \\
		\frac{1}{3} & 0 & 0 & 0 & 0 & 0 & 1 & 0 & 0 & 0 & 1 & 0 & 0 \\
		\frac{2}{3} & 0 & 0 & 0 & 0 & 0 & 0 & 1 & 0 & 0 & 0 & 1 & 0 \\
		1 & 0 & 0 & 0 & 0 & 0 & 0 & 0 & 1 & 0 & 0 & 0 & 1 \\
		\hline
		& \frac{1}{8} & \frac{3}{8} & \frac{3}{8} & \frac{1}{8} & \frac{9}{8} & \frac{3}{8} & \frac{3}{8} & -\frac{7}{8} & 1 & 0 & 0 & 0 \\
		& -\frac{1}{8} & \frac{5}{8} & -\frac{3}{8} & \frac{7}{8} & \frac{7}{8} & -\frac{19}{8} & \frac{45}{8} & -\frac{25}{8} & 0 & 1 & 0 & 0 \\
		& -\frac{7}{8} & \frac{27}{8} & -\frac{37}{8} & \frac{25}{8} & \frac{25}{8} & -\frac{93}{8} & \frac{131}{8} & -\frac{55}{8} & 0 & 0 & 1 & 0 \\
		& -\frac{25}{8} & \frac{93}{8} & -\frac{123}{8} & \frac{63}{8} & \frac{55}{8} & -\frac{195}{8} & \frac{237}{8} & -\frac{89}{8} & 0 & 0 & 0 & 1 \\
	\end{butchertableau}.
\end{equation*}

When the order of the method increases, so does the magnitude of the method coefficients: a phenomenon previously described for parallel IMEX DIMSIMs.  Similarly, the distribution of abscissae can limit the growth of coefficients, and thus, the floating-point errors associated with them.  \Cref{tab:parallel_ensemble_max_coeff} lists these maximum coefficients for $\c$'s evenly space between $[0,1]$, as well as $[2-s,1]$.

\begin{table}
	\centering
	\begin{tabular}{c|c|c}
		Method order & $c_i = \frac{i-1}{s-1}$ & $c_i = 1 - s + i$ \\ \hline
		2 & 1.50 & 1.50 \\
		3 & 4.67 & 1.92 \\
		4 & 29.62 & 3.54 \\
		5 & 203.87 & 6.37 \\
		6 & 1380.73 & 13.07 \\
		7 & 9868.32 & 23.62 \\
		8 & 69256.88 & 47.97 \\
		9 & 506662.23 & 87.98 \\
		10 & 3639853.98 & 177.82
	\end{tabular}
	\caption{Approximate values for the largest coefficient in absolute value from $\BE$ and $\BI$ for parallel ensemble IMEX Euler methods of orders two to ten with $\lambda=1$.}
	\label{tab:parallel_ensemble_max_coeff}
\end{table}

\subsection{Stability}

An interesting property of parallel ensemble IMEX Euler methods is that $\BE$, $\BI$, $\AE$, and $\AI$ all simultaneously triangularize.  The stability matrix \cref{eq:parallel_imex_glm_stability} can therefore be put into an upper triangular form with a simple similarity transformation:
\begin{equation} \label{eq:transformed_stability}
	\C{s}^{-1} \, \M(w, \widehat{w}) \, \C{s} = \eye{r} + \frac{w}{1 - \lambda \, \widehat{w}} \, \phishiftmat{s} + \frac{\widehat{w}}{1 - \lambda \, \widehat{w}} \, \phishiftmat{s} \left( \eye{s} - \lambda \, \shiftmat{s} \right).
\end{equation}
The diagonal entries of \cref{eq:transformed_stability} are all  $1 + (w+\widehat{w})/(1 - \lambda \, \widehat{w}) $ and identically are the eigenvalues of the stability matrix.  Note the geometric multiplicity of this repeated eigenvalue is $r$ when $w = \widehat{w} = 0$ and $1$ otherwise. In order to ensure L-stability of the implicit base method as well as $\rho(\M(w, \infty)) = 0$, we set $\lambda = 1$.  In this case, the eigenvalues simplify to $(1+w)/(1-\widehat{w})$ matching the stability of the IMEX Euler scheme
\begin{equation*}
	y_{n} = y_{n-1} + h \, f(t_{n-1}, y_{n-1}) + h\, g(t_{n}, y_{n}).
\end{equation*}

There are several other interesting stability features for parallel ensemble IMEX Euler methods.  First, stability is independent of the order and the choice of abscissae, allowing a systematic approach to develop stable methods of arbitrary order.  The constrained nonstiff stability regions has the simple form
\begin{equation*}
	\sRegionNonstiff = \sRegionE = \left\{ w \in \Cplx : \abs{1 + w} < 1 \vee w=0 \right\},
\end{equation*}
when $s > 1$.  Except for the origin, the boundary of this circular stability region is carefully excluded because the $1$ eigenvalue of $\M$ is defective at those points.  This family of methods is stability decoupled in the sense that linear stability of the base methods for their respective partitions implies linear stability of the IMEX scheme.

We note that aside from the origin, \sRegionNonstiff{} does not contain any of the imaginary axis, indicating potential stability issues when $f$ is oscillatory.  This analysis is a bit pessimistic, however, as \sRegionNonstiff{} represents the explicit stability when $\widehat{w}$ is chosen in a worst-case scenario.  Only when $\widehat{w}=0$ is there instability for all purely imaginary $w$.  As the modulus of $\widehat{w}$ grows, the range of imaginary $w$ for which the IMEX method is stable also grows.
\section{Numerical experiments}
\label{sec:experiments}

We provide numerical experiments to confirm the order of convergence  and to study the performance of our methods compared to other IMEX methods. We use the CUSP and Allen--Cahn problems in our experiments.

\subsection{CUSP problem}
\label{subsec:cusp}

The CUSP problem \cite[Chapter IV.10]{Hairer_book_II} is associated with the equations
\begin{align} \label{eq:cusp}
	\begin{split}
	 	\pdv{y}{t} &= -\frac{1}{\varepsilon} \left( y^3 + a \, y + b \right) + \sigma \, \pdv[2]{y}{x}, \\
	 	\pdv{a}{t} &=  b+ 0.07 \, v +  \sigma \, \pdv[2]{a}{x},\\
	 	\pdv{b}{t} &= b \, (1-a^2) - a- 0.4 \, y + 0.035 \, v + \sigma \, \pdv[2]{b}{x},
	\end{split}
\end{align}
where $v = \frac{u}{u+ 0.1}$ and $u = (y-0.7) \, (y-1.3)$.  The timespan is $t \in [0,1.1]$, the spatial domain is $x \in [0,1]$, and the parameters are chosen as $\sigma = \frac{1}{144}$ and $\varepsilon = 10^{-4}$. 
Spatial derivatives are discretized using second order central finite differences on a uniform mesh with $N=32$ points and periodic boundary conditions. The initial conditions are
\begin{equation*}
	y_i(0) =0, \qquad a_i(0) = -2 \cos(\frac{2\pi i}{N}), \qquad b_i(0) = 2\sin(\frac{2\pi i}{N}), 
\end{equation*}
for $i=1,\dots,N$.  Note that the problem is singularly perturbed in the $y$ component and the stiffness of the system can be controlled using $\varepsilon$.  Following the splitting used in \cite{JACKIEWICZ2017}, the diffusion terms and the term scaled by $\varepsilon^{-1}$ form $g$, while the remaining terms form $f$.  The MATLAB implementation of the CUSP problem is available in \cite{otp,otpsoft}.

We performed a fixed time-stepping convergence study of the new methods. \Cref{fig:CUSP-convergence} shows the error of the final solution versus number of timesteps.  Error is computed in the $\ell^2$ sense using a high-accuracy reference solution.  In all cases, the parallel IMEX GLMs converge at least at the same rate as the theoretical order of accuracy.

\begin{figure}
	\centering
	\begin{subfigure}[t]{.48\linewidth}
		\begin{tikzpicture}
			\begin{loglogaxis}[xlabel={Steps},ylabel={Error},
			legend entries={Parallel IMEX DIMSIM2, Parallel IMEX DIMSIM7}]
			\addplot table[x index=0,y index=1, col sep=comma] {./Data/Convergence/IMEX-DIMSIM2.txt};
			\addplot table[x index=0,y index=1, col sep=comma] {./Data/Convergence/IMEX-DIMSIM7.txt};
			\draw[dashed] (axis cs:32e4, 3e-7) -- node[above]{\scriptsize $2$} (axis cs:48e4, 1.333e-7);
			\draw[dashed] (axis cs:2e5,32e-6) -- node[above]{\scriptsize $7$} (axis cs:2.8e5, 3.0357e-06);
			\end{loglogaxis}
		\end{tikzpicture}
		\caption{Error versus steps for parallel IMEX DIMSIMs of orders two and seven.  For this problem, the seventh order method converges faster than the nominal order.}
	\end{subfigure}
	\hfil
	\begin{subfigure}[t]{.48\linewidth}
		\begin{tikzpicture}
			\begin{loglogaxis}[xlabel={Steps},ylabel={Error},
			legend entries={Parallel Ensemble IMEX Euler3, Parallel Ensemble IMEX Euler8}]
			\addplot[mark=triangle,color=teal] table[x index=0,y index=1, col sep=comma] {./Data/Convergence/Ensemble3.txt};
			\addplot[mark=asterisk,color=black] table[x index=0,y index=1, col sep=comma] {./Data/Convergence/Ensemble8.txt};
			\draw[dashed] (axis cs:1e6,27e-10) -- node[above]{\scriptsize $3$} (axis  cs:3e6, 1e-10);
			\draw[dashed] (axis cs:3e5,2e-10) -- node[below]{\scriptsize $8$} (axis cs:6e5, 1.5625e-12);
			\end{loglogaxis}
		\end{tikzpicture}
		\caption{Error versus steps for parallel ensemble IMEX Euler methods of orders three and eight.}
	\end{subfigure}
\caption{Convergence of parallel IMEX DIMSIM and parallel ensemble IMEX Euler methods for the CUSP problem \cref{eq:cusp}.}
\label{fig:CUSP-convergence}
\end{figure}
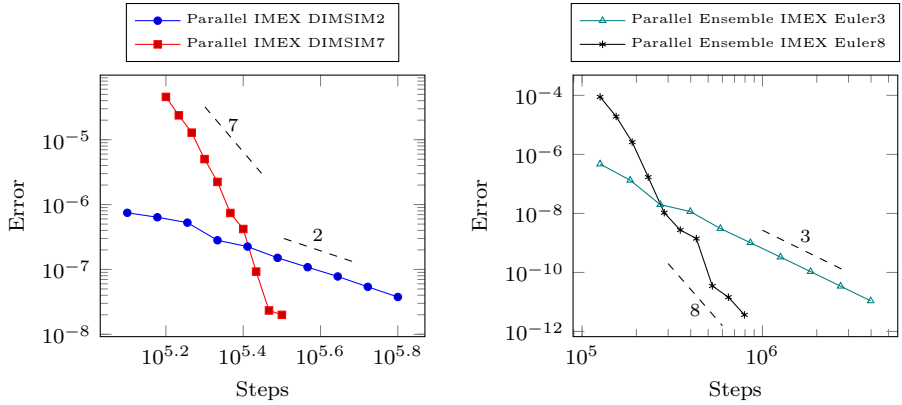

\subsection{Allen--Cahn problem}
\label{subsec:AllenCahn}

We also consider the two-dimensional Allen--Cahn problem described in \cite{Sandu_2016_highOrderIMEX-GLM}.  It is a reaction-diffusion system governed by the equation
\begin{equation} \label{eq:allen-cahn}
	\pdv{u}{t} = \alpha \, \laplacian{u} + \beta \, (u - u^3) + s,
\end{equation}
where $\alpha = 0.1$ and $\beta = 3$.  The time-dependent Dirichlet boundary conditions and source term $s(t,x,y)$ are derived using method of manufactured solutions such that the exact solution is
\begin{equation*}
	u(t,x,y) = 2 + \sin(2\pi \left( x -t \right)) \cos(3 \pi \left(y-t \right)).
\end{equation*}
We discretize the PDE on a unit square domain using degree two Lagrange finite elements and a uniform triangular mesh with $N=32$ points in each direction.  The diffusion term and forcing associated with the boundary conditions are treated implicitly, while the reaction and source term are treated explicitly.

The problem is implemented using the FEniCS package \cite{Fenics2015a} leveraging OpenMP parallelism to speed up $f$ and $g$ evaluations, as well as MPI parallelism of stage computations made possible by the structure of the parallel IMEX GLMs.  All tests were run on the Cascades cluster maintained by Virginia Tech's Advance Research Computing center (ARC). Parallel experiments were performed on $p=q=r=s$ nodes, each using 12 cores. Serial experiments were done on a single node with the same number of cores. The error was computed using the $\ell_2$ norm by comparing the nodal values of the numerical solution against a high-accuracy reference solution.

\Cref{fig:Allen-Cahn-performance} summarizes the results of this experiment by comparing several additive Runge--Kutta (ARK) methods and IMEX DIMSIMs with Parallel IMEX GLMs derived in this paper. At order three, serial methods are ARK3(2)4L[2]SA from \cite{Kennedy2003} and IMEX-DIMSIM3 from \cite{Sandu_2015_Stable_IMEX-GLM}. At order four, comparisons are done against  ARK4(3)7L[2]SA\textsubscript{1} from \cite{KENNEDY2019} and IMEX-DIMSIM4 from \cite{Sandu_2016_highOrderIMEX-GLM}. Order five serial methods are ARK5(4)8L[2]SA\textsubscript{2} from \cite{KENNEDY2019} and IMEX-DIMSIM5 from \cite{Sandu_2016_highOrderIMEX-GLM}. Finally, the order six baseline is IMEX-DIMSIM6($\sRegionNonstiff[\pi/2]$) from \cite{JACKIEWICZ2017}.  The results show parallel ensemble IMEX Euler methods are the most efficient in all cases. Parallel IMEX DIMSIMs are competitive at orders three and six and surpass the efficiency of serial schemes at orders four and five.

\Cref{fig:Allen-Cahn-convergence} plots convergence of the methods used in the experiment. We can see the ARK methods exhibit order reduction for this problem, which explains their poor efficiency results.  All other methods achieve the expected order of accuracy.  For a fixed number of steps, the parallel IMEX GLMs are less accurate than the serial IMEX GLMs, which indicates parallel methods have larger error constants and are not the most efficient when limited to serial execution.  This is to be expected given parallel methods have a more restrictive structure and less coefficients available for optimizing the principal error and stability.

\begin{figure}
	\centering
	\begin{subfigure}[b]{.49\linewidth}
		\begin{tikzpicture}
		\begin{loglogaxis}[xlabel={Runtime (s)},ylabel={Error},
		legend entries={ Parallel Ensemble IMEX Euler3, Parallel IMEX DIMSIM3,IMEX-DIMSIM3, ARK3(2)4L[2]SA }]
		\addplot table[x index=2,y index=1] {./Data/Performance/Order3/Parallel_Ensemble_IMEX_Euler3.dat};
		\addplot table[x index=2,y index=1] {./Data/Performance/Order3/Parallel_IMEX-DIMSIM3.dat};
		\addplot table[x index=2,y index=1] {./Data/Performance/Order3/IMEX-DIMSIM3.dat};
		\addplot table[x index=2,y index=1] {./Data/Performance/Order3/ARK324L2SA.dat};
		\end{loglogaxis}
		\end{tikzpicture}
		\caption{ Order 3}
	\end{subfigure}
	\hfill
	\begin{subfigure}[b]{.49\linewidth}
		\begin{tikzpicture}
		\begin{loglogaxis}[xlabel={Runtime (s)},ylabel={Error},
		legend entries={Parallel Ensemble IMEX Euler4, Parallel IMEX DIMSIM4,IMEX-DIMSIM4,ARK4(3)7L[2]SA\textsubscript{1} }]
		\addplot table[x index=2,y index=1] {./Data/Performance/Order4/Parallel_Ensemble_IMEX_Euler4.dat};
		\addplot table[x index=2,y index=1] {./Data/Performance/Order4/Parallel_IMEX-DIMSIM4.dat};
		\addplot table[x index=2,y index=1] {./Data/Performance/Order4/IMEX-DIMSIM4.dat};
		\addplot table[x index=2,y index=1] {./Data/Performance/Order4/ARK437L2SA_1.dat};
		\end{loglogaxis}
		\end{tikzpicture}
		\caption{ Order 4}
	\end{subfigure}
	
	\begin{subfigure}[b]{.49\linewidth}
		\begin{tikzpicture}
		\begin{loglogaxis}[xlabel={Runtime (s)},ylabel={Error},
		legend entries={Parallel Ensemble IMEX Euler5,Parallel IMEX DIMSIM5,  IMEX-DIMSIM5,ARK5(4)8L[2]SA\textsubscript{2}}]
		\addplot table[x index=2,y index=1] {./Data/Performance/Order5/Parallel_Ensemble_IMEX_Euler5.dat};
		\addplot table[x index=2,y index=1] {./Data/Performance/Order5/Parallel_IMEX-DIMSIM5b.dat};
		\addplot table[x index=2,y index=1] {./Data/Performance/Order5/IMEX-DIMSIM5.dat};
		\addplot table[x index=2,y index=1] {./Data/Performance/Order5/ARK548L2SA_2.dat};
		\end{loglogaxis}
		\end{tikzpicture}
		\caption{ Order 5}
	\end{subfigure}
	\hfill
	\begin{subfigure}[b]{.49\linewidth}
		\begin{tikzpicture}
		\begin{loglogaxis}[xlabel={Runtime (s)},ylabel={Error},
		legend entries={Parallel Ensemble IMEX Euler6,Parallel IMEX DIMSIM6, IMEX-DIMSIM6($\sRegionNonstiff[\pi/2]$)}]
		\addplot table[x index=2,y index=1] {./Data/Performance/Order6/Parallel_Ensemble_IMEX_Euler6.dat};
		\addplot table[x index=2,y index=1] {./Data/Performance/Order6/Parallel_IMEX-DIMSIM6b.dat};
		\addplot table[x index=2,y index=1] {./Data/Performance/Order6/JACKW-DIMSIM6.dat};
		\end{loglogaxis}
		\end{tikzpicture}
		\caption{ Order 6}
	\end{subfigure}
	\caption{Work-precision diagrams for the Allen--Cahn problem \cref{eq:allen-cahn}}
	\label{fig:Allen-Cahn-performance}
\end{figure}
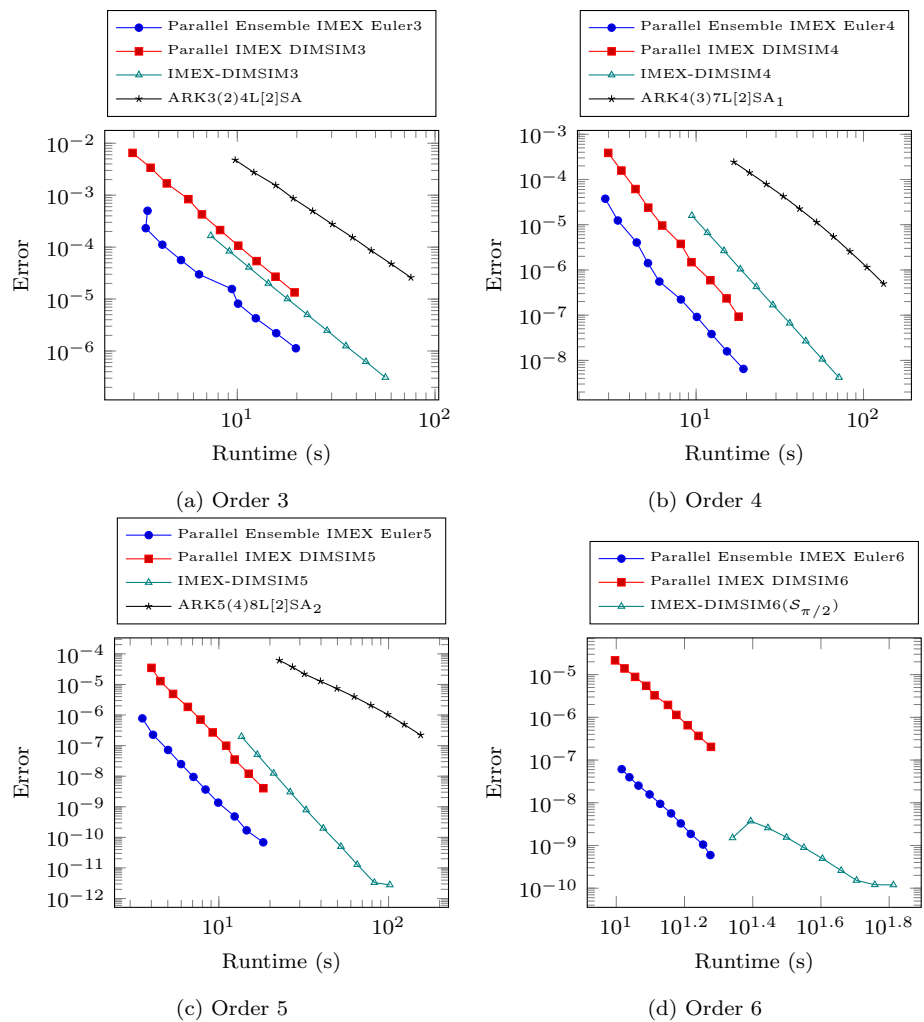
%
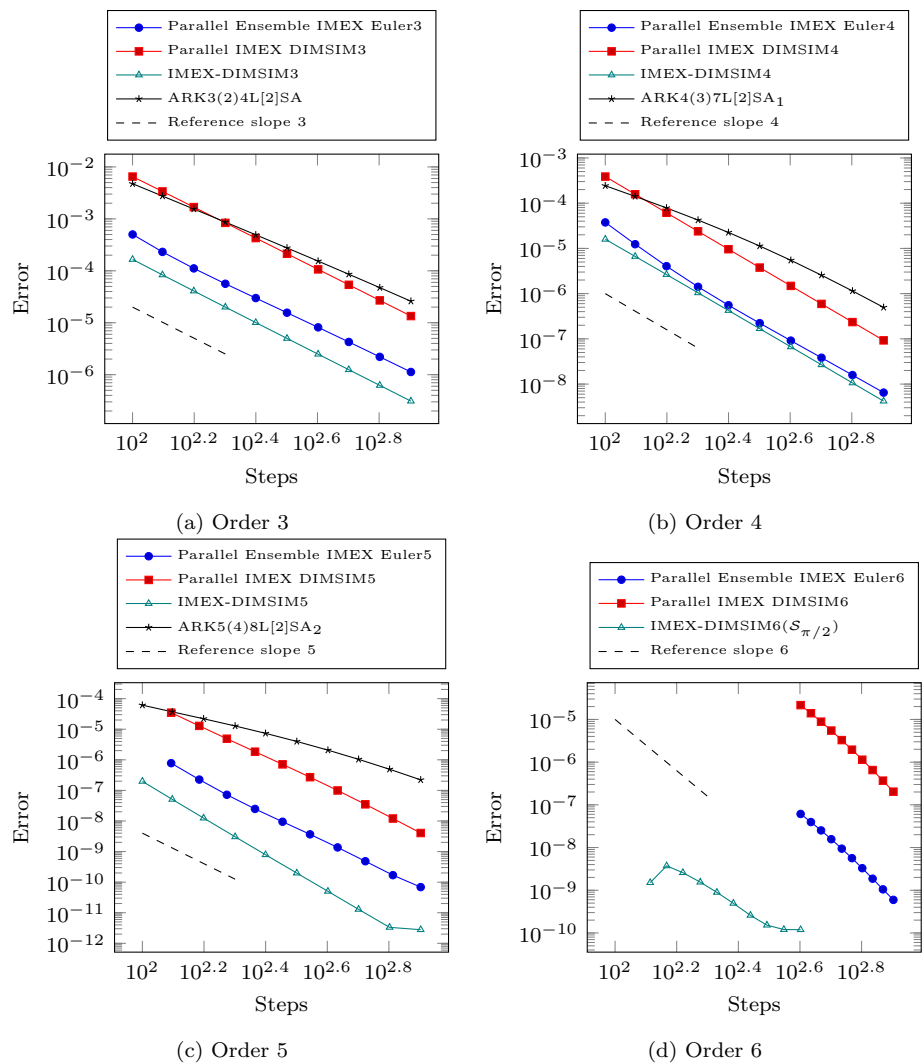
\begin{figure}
	\centering
	\begin{subfigure}[b]{.49\linewidth}
		\begin{tikzpicture}
		\begin{loglogaxis}[xlabel={Steps},ylabel={Error},
		legend entries={ Parallel Ensemble IMEX Euler3, Parallel IMEX DIMSIM3,IMEX-DIMSIM3, ARK3(2)4L[2]SA , Reference slope 3}]
		\addplot table[x index=0,y index=1] {./Data/Performance/Order3/Parallel_Ensemble_IMEX_Euler3.dat};
		\addplot table[x index=0,y index=1] {./Data/Performance/Order3/Parallel_IMEX-DIMSIM3.dat};
		\addplot table[x index=0,y index=1] {./Data/Performance/Order3/IMEX-DIMSIM3.dat};
		\addplot table[x index=0,y index=1] {./Data/Performance/Order3/ARK324L2SA.dat};
		\addplot[dashed] coordinates {(1e2, 20e-6) (2e2,20/8*1e-6) };
		\end{loglogaxis}
		\end{tikzpicture}
		\caption{ Order 3}
	\end{subfigure}
	\hfill
	\begin{subfigure}[b]{.49\linewidth}
		\begin{tikzpicture}
		\begin{loglogaxis}[xlabel={Steps},ylabel={Error},
		legend entries={Parallel Ensemble IMEX Euler4, Parallel IMEX DIMSIM4,IMEX-DIMSIM4,ARK4(3)7L[2]SA\textsubscript{1}, Reference slope 4}]
		\addplot table[x index=0,y index=1] {./Data/Performance/Order4/Parallel_Ensemble_IMEX_Euler4.dat};
		\addplot table[x index=0,y index=1] {./Data/Performance/Order4/Parallel_IMEX-DIMSIM4.dat};
		\addplot table[x index=0,y index=1] {./Data/Performance/Order4/IMEX-DIMSIM4.dat};
		\addplot table[x index=0,y index=1] {./Data/Performance/Order4/ARK437L2SA_1.dat};
		\addplot[dashed] coordinates {(1e2, 1e-6) (2e2,1/16*1e-6) };
		\end{loglogaxis}
		\end{tikzpicture}
		\caption{ Order 4}
	\end{subfigure}
	
	\begin{subfigure}[b]{.49\linewidth}
		\begin{tikzpicture}
		\begin{loglogaxis}[xlabel={Steps},ylabel={Error},
		legend entries={Parallel Ensemble IMEX Euler5,Parallel IMEX DIMSIM5,  IMEX-DIMSIM5,ARK5(4)8L[2]SA\textsubscript{2}, Reference slope 5}]
		\addplot table[x index=0,y index=1] {./Data/Performance/Order5/Parallel_Ensemble_IMEX_Euler5.dat};
		\addplot table[x index=0,y index=1] {./Data/Performance/Order5/Parallel_IMEX-DIMSIM5b.dat};
		\addplot table[x index=0,y index=1] {./Data/Performance/Order5/IMEX-DIMSIM5.dat};
		\addplot table[x index=0,y index=1] {./Data/Performance/Order5/ARK548L2SA_2.dat};
		\addplot[dashed] coordinates {(1e2, 4e-9) (2e2,4/32*1e-9) };
		\end{loglogaxis}
		\end{tikzpicture}
		\caption{ Order 5}
	\end{subfigure}
	\hfill
	\begin{subfigure}[b]{.49\linewidth}
		\begin{tikzpicture}
		\begin{loglogaxis}[xlabel={Steps},ylabel={Error},
		legend entries={Parallel Ensemble IMEX Euler6,Parallel IMEX DIMSIM6, IMEX-DIMSIM6($\sRegionNonstiff[\pi/2]$), Reference slope 6}]
		\addplot table[x index=0,y index=1] {./Data/Performance/Order6/Parallel_Ensemble_IMEX_Euler6.dat};
		\addplot table[x index=0,y index=1] {./Data/Performance/Order6/Parallel_IMEX-DIMSIM6b.dat};
		\addplot table[x index=0,y index=1] {./Data/Performance/Order6/JACKW-DIMSIM6.dat};
		\addplot[dashed] coordinates {(1e2, 1e-5) (2e2,1/64*1e-5) };
		\end{loglogaxis}
		\end{tikzpicture}
		\caption{ Order 6}
	\end{subfigure}
	\caption{Convergence diagrams for the Allen--Cahn problem \cref{eq:allen-cahn}}
	\label{fig:Allen-Cahn-convergence}
\end{figure}
%
%

\section{Conclusion}
\label{sec:conclusion}

This paper studies parallel IMEX GLMs and provides a methodology to derive and solve simple order conditions for methods of arbitrary order. Using this framework, we construct two families of methods, based on existing DIMSIMs and on IMEX Euler, and provide linear stability analyses for them. 

Our numerical experiments show that parallel IMEX GLMs can outperform existing serial IMEX schemes.  Between parallel IMEX DIMSIMs and parallel ensemble IMEX Euler methods, the latter proved to be the most competitive.  The error for the ensemble methods is generally smaller than that of the DIMSIMs, due in part to the improved ending procedure.  Moreover, the magnitude of method coefficients grows slower for ensemble methods as documented in \cref{tab:imex_dimsim_max_coeff,tab:parallel_ensemble_max_coeff}, reducing the impact of accumulated floating-point errors.  For orders five and higher we have to carefully select the method and distribution of the abscissae to control these errors.  In addition, one notes that parallel ensemble IMEX Euler methods tend to have smaller values of $\lambda$, which improves convergence of iterative linear solvers used in the Newton iterations.

Owing to their excellent stability properties, the ensemble family shows great potential for constructing other types of partitioned GLMs.  Of particular interest are alternating directions implicit (ADI) GLMs \cite{Sarshar2019}, as well as multirate GLMs. The authors hope to study these in future works.

\begin{acknowledgements}
	The authors acknowledge Advanced Research Computing at Virginia Tech for providing computational resources and technical support that have contributed to the results reported within this paper. URL: \url{http://www.arc.vt.edu}
	
	On behalf of all authors, the corresponding author states that there is no conflict of interest.
\end{acknowledgements}

\bibliographystyle{spmpsci}
\bibliography{Bib/glm,Bib/imex,Bib/misc,Bib/ode_general,Bib/sandu,Bib/imex_glm,Bib/ark}

\begin{thebibliography}{10}
\providecommand{\url}[1]{{#1}}
\providecommand{\urlprefix}{URL }
\expandafter\ifx\csname urlstyle\endcsname\relax
  \providecommand{\doi}[1]{DOI~\discretionary{}{}{}#1}\else
  \providecommand{\doi}{DOI~\discretionary{}{}{}\begingroup
  \urlstyle{rm}\Url}\fi

\bibitem{Fenics2015a}
Aln{\ae}s, M.S., Blechta, J., Hake, J., Johansson, A., Kehlet, B., Logg, A.,
  Richardson, C., Ring, J., Rognes, M.E., Wells, G.N.: The {FEniCS} project
  version 1.5.
\newblock Archive of Numerical Software \textbf{3}(100) (2015).
\newblock \doi{10.11588/ans.2015.100.20553}

\bibitem{ascher1997implicit}
Ascher, U.M., Ruuth, S.J., Spiteri, R.J.: Implicit-explicit {Runge--Kutta}
  methods for time-dependent partial differential equations.
\newblock Applied Numerical Mathematics \textbf{25}(2-3), 151--167 (1997)

\bibitem{ascher1995implicit}
Ascher, U.M., Ruuth, S.J., Wetton, B.T.: Implicit-explicit methods for
  time-dependent partial differential equations.
\newblock SIAM Journal on Numerical Analysis \textbf{32}(3), 797--823 (1995)

\bibitem{boscarino2009class}
Boscarino, S., Russo, G.: On a class of uniformly accurate {IMEX}
  {Runge--Kutta} schemes and applications to hyperbolic systems with
  relaxation.
\newblock SIAM Journal on Scientific Computing \textbf{31}(3), 1926--1945
  (2009)

\bibitem{BRAS2018207}
Bra{\'s}, M., Cardone, A., Jackiewicz, Z., Pierzcha{\l}a, P.: Error propagation
  for implicit--explicit general linear methods.
\newblock Applied Numerical Mathematics \textbf{131}, 207 -- 231 (2018).
\newblock \doi{https://doi.org/10.1016/j.apnum.2018.05.004}

\bibitem{bras2017accurate}
Bra{\'s}, M., Izzo, G., Jackiewicz, Z.: Accurate implicit--explicit general
  linear methods with inherent {Runge--Kutta} stability.
\newblock Journal of Scientific Computing \textbf{70}(3), 1105--1143 (2017)

\bibitem{butcher1993diagonally}
Butcher, J.C.: Diagonally-implicit multi-stage integration methods.
\newblock Applied Numerical Mathematics \textbf{11}(5), 347--363 (1993)

\bibitem{butcher1993general}
Butcher, J.C.: General linear methods for the parallel solution of ordinary
  differential equations.
\newblock In: Contributions In Numerical Mathematics, pp. 99--111. World
  Scientific (1993)

\bibitem{butcher1997order}
Butcher, J.C.: Order and stability of parallel methods for stiff problems.
\newblock Advances in Computational Mathematics \textbf{7}(1-2), 79--96 (1997)

\bibitem{butcher1995parallel}
Butcher, J.C., Chartier, P.: Parallel general linear methods for stiff ordinary
  differential and differential algebraic equations.
\newblock Applied Numerical Mathematics \textbf{17}(3), 213 -- 222 (1995).
\newblock \doi{10.1016/0168-9274(95)00029-T}.
\newblock Special Issue on Numerical Methods for Ordinary Differential
  Equations

\bibitem{Califano2017}
Califano, G., Izzo, G., Jackiewicz, Z.: {Starting procedures for general linear
  methods}.
\newblock Applied Numerical Mathematics \textbf{120}, 165--175 (2017).
\newblock \doi{10.1016/J.APNUM.2017.05.009}

\bibitem{Sandu_2014_IMEX-RK}
Cardone, A., Jackiewicz, Z., Sandu, A., Zhang, H.: Extrapolated {IMEX
  Runge--Kutta} methods.
\newblock Mathematical Modelling and Analysis \textbf{19}(2), 18--43 (2014).
\newblock \doi{10.3846/13926292.2014.892903}

\bibitem{Sandu_2014_IMEX_GLM_Extrap}
Cardone, A., Jackiewicz, Z., Sandu, A., Zhang, H.: Extrapolation-based
  implicit-explicit general linear methods.
\newblock Numerical Algorithms \textbf{65}(3), 377--399 (2014).
\newblock \doi{10.1007/s11075-013-9759-y}

\bibitem{Sandu_2015_Stable_IMEX-GLM}
Cardone, A., Jackiewicz, Z., Sandu, A., Zhang, H.: Construction of highly
  stable implicit-explicit general linear methods.
\newblock In: AIMS proceedings, vol. Dynamical Systems, Differential Equations,
  and Applications. Madrid, Spain (2015).
\newblock \doi{10.3934/proc.2015.0185}

\bibitem{otpsoft}
{Computational Science Laboratory}: {ODE} test problems (2020).
\newblock
  \urlprefix\url{https://github.com/ComputationalScienceLaboratory/ODE-Test-Problems}

\bibitem{Connors2011}
Connors, J.M., Miloua, A.: Partitioned time discretization for parallel
  solution of coupled {ODE} systems.
\newblock BIT Numerical Mathematics \textbf{51}(2), 253--273 (2011).
\newblock \doi{10.1007/s10543-010-0295-z}

\bibitem{Sandu_2010_extrapolatedIMEX}
Constantinescu, E., Sandu, A.: {Extrapolated implicit-explicit time stepping}.
\newblock SIAM Journal on Scientific Computing \textbf{31}(6), 4452--4477
  (2010).
\newblock \doi{10.1137/080732833}

\bibitem{ditkowski2019imex}
Ditkowski, A., Gottlieb, S., Grant, Z.J.: Imex error inhibiting schemes with
  post-processing.
\newblock arXiv preprint arXiv:1912.10027  (2019)

\bibitem{frank1997stability}
Frank, J., Hundsdorfer, W., Verwer, J.: On the stability of implicit-explicit
  linear multistep methods.
\newblock Applied Numerical Mathematics \textbf{25}(2-3), 193--205 (1997)

\bibitem{Hairer_book_II}
Hairer, E., Wanner, G.: Solving ordinary differential equations {II}: {S}tiff
  and differential-algebraic problems, 2 edn.
\newblock No.~14 in Springer Series in Computational Mathematics.
  Springer-Verlag Berlin Heidelberg (1996)

\bibitem{hundsdorfer2007imex}
Hundsdorfer, W., Ruuth, S.J.: {IMEX} extensions of linear multistep methods
  with general monotonicity and boundedness properties.
\newblock Journal of Computational Physics \textbf{225}(2), 2016--2042 (2007)

\bibitem{izzo2019transformed}
Izzo, G., Jackiewicz, Z.: Transformed implicit-explicit {DIMSIMs} with strong
  stability preserving explicit part.
\newblock Numerical Algorithms \textbf{81}(4), 1343--1359 (2019)

\bibitem{jackiewicz2009general}
Jackiewicz, Z.: General linear methods for ordinary differential equations.
\newblock John Wiley \& Sons (2009)

\bibitem{JACKIEWICZ2017}
Jackiewicz, Z., Mittelmann, H.: Construction of {IMEX DIMSIMs} of high order
  and stage order.
\newblock Applied Numerical Mathematics \textbf{121}, 234 -- 248 (2017).
\newblock \doi{10.1016/j.apnum.2017.07.004}

\bibitem{Kennedy2003}
Kennedy, C.A., Carpenter, M.H.: Additive {Runge--Kutta} schemes for
  convection-diffusion-reaction equations.
\newblock Applied Numerical Mathematics \textbf{44}(1-2), 139--181 (2003).
\newblock \doi{10.1016/S0168-9274(02)00138-1}

\bibitem{KENNEDY2019}
Kennedy, C.A., Carpenter, M.H.: Higher-order additive {Runge--Kutta} schemes
  for ordinary differential equations.
\newblock Applied Numerical Mathematics \textbf{136}, 183 -- 205 (2019).
\newblock \doi{10.1016/j.apnum.2018.10.007}

\bibitem{lang2017extrapolation}
Lang, J., Hundsdorfer, W.: Extrapolation-based implicit-explicit {P}eer methods
  with optimised stability regions.
\newblock Journal of Computational Physics \textbf{337}, 203--215 (2017)

\bibitem{pareschi2005implicit}
Pareschi, L., Russo, G.: Implicit--explicit {Runge--Kutta} schemes and
  applications to hyperbolic systems with relaxation.
\newblock Journal of Scientific computing \textbf{25}(1), 129--155 (2005)

\bibitem{otp}
Roberts, S., Popov, A.A., Sandu, A.: {ODE} test problems: a {MATLAB} suite of
  initial value problems.
\newblock CoRR \textbf{abs/1901.04098} (2019).
\newblock \urlprefix\url{http://arxiv.org/abs/1901.04098}

\bibitem{Sarshar2019}
Sarshar, A., Roberts, S., Sandu, A.: {Alternating directions implicit
  integration in a general linear method framework}.
\newblock Journal of Computational and Applied Mathematics p. 112619 (2019).
\newblock \doi{10.1016/j.cam.2019.112619}

\bibitem{schneider2018extrapolation}
Schneider, M., Lang, J., Hundsdorfer, W.: Extrapolation-based super-convergent
  implicit-explicit {P}eer methods with {A}-stable implicit part.
\newblock Journal of Computational Physics \textbf{367}, 121--133 (2018)

\bibitem{soleimani2018superconvergent}
Soleimani, B., Weiner, R.: Superconvergent {IMEX} peer methods.
\newblock Applied Numerical Mathematics \textbf{130}, 70--85 (2018)

\bibitem{Sandu_2012_ICCS-IMEX}
Zhang, H., Sandu, A.: A second-order diagonally-implicit-explicit multi-stage
  integration method.
\newblock In: Proceedings of the International Conference on Computational
  Science ICCS 2012, vol.~9, pp. 1039--1046 (2012).
\newblock \doi{10.1016/j.procs.2012.04.112}

\bibitem{Sandu_2014_IMEX-GLM}
Zhang, H., Sandu, A., Blaise, S.: Partitioned and implicit-explicit general
  linear methods for ordinary differential equations.
\newblock Journal of Scientific Computing \textbf{61}(1), 119--144 (2014).
\newblock \doi{10.1007/s10915-014-9819-z}

\bibitem{Sandu_2016_highOrderIMEX-GLM}
Zhang, H., Sandu, A., Blaise, S.: High order implicit--explicit general linear
  methods with optimized stability regions.
\newblock SIAM Journal on Scientific Computing \textbf{38}(3), A1430--A1453
  (2016).
\newblock \doi{10.1137/15M1018897}

\bibitem{Sandu_2015_IMEX-TSRK}
Zharovsky, E., Sandu, A., Zhang, H.: A class of {IMEX} two-step {Runge--Kutta}
  methods.
\newblock SIAM Journal on Numerical Analysis \textbf{53}(1), 321--341 (2015).
\newblock \doi{10.1137/130937883}

\end{thebibliography}

\end{document}